\definecolor{e-mail}{rgb}{0,.40,.80}
\definecolor{reference}{rgb}{.20,.60,.22}
\definecolor{citation}{rgb}{0,.40,.80}
\newlength{\bibitemsep}\setlength{\bibitemsep}{.2\baselineskip plus .5\baselineskip minus .5\baselineskip}
\newlength{\bibparskip}\setlength{\bibparskip}{0pt}
\let\oldthebibliography\thebibliography
\renewcommand\thebibliography[1]{
  \oldthebibliography{#1}
  \setlength{\parskip}{\bibitemsep}
  \setlength{\itemsep}{\bibparskip}
}
\DeclareMathOperator{\prem}{prem}
\DeclareMathOperator{\class}{class}
\DeclareMathOperator{\lc}{lc}
\DeclareMathOperator{\rep}{rep}
\DeclareMathOperator{\zero}{zero}
\DeclareMathOperator{\stab}{stab}
\DeclareMathOperator{\Gal}{Gal}
\newtheorem{thm}{Theorem}[section]
\newtheorem{lem}{Lemma}[section]
\newtheorem{cor}{Corollary}[section]
\newtheorem{example}{Example}[section]
\newtheorem*{rem}{Remark}
\newtheorem*{ack}{Acknowledgement}
\newtheorem{definition}{Definition}[section]
\providecommand{\keywords}[1]{{\textit{keywords:}} #1}
\title{A New Bound on Hrushovski's Algorithm for Computing the Galois Group of a Linear Differential Equation}
\author{Mengxiao Sun}
\date{}
\begin{document}
\maketitle

\begin{abstract}
The complexity of computing the Galois group of a linear differential equation is of general interest. In a recent work, Feng gave the first degree bound on Hrushovski's algorithm for computing the Galois group of a linear differential equation. This bound is the degree bound of the polynomials used in the first step of the algorithm for finding a proto-Galois group (see Definition \ref{def: protoGalois}) and is sextuply exponential in the order of the differential equation. In this paper, we use Sz\'ant\'o's algorithm of triangular representation for algebraic sets to analyze the complexity of computing the Galois group of a linear differential equation and we give a new bound which is triple exponential in the order of the given differential equation.
\\
\keywords{differential Galois groups, linear differential equations, algorithms, triangular sets}
\\
2010 Mathematics Subject Classification: 34A30, 12H05, 68W30
\end{abstract}

\section{Introduction}

The differential Galois group (see Definition \ref{def: GaloisGroup}) is an analogue for a linear differential equation of the classical Galois group for a polynomial equation. 
An important application of the differential Galois group is that a linear differential equation can be solved by integrals, exponentials and algebraic functions if and only if the
connected component of its differential Galois group is solvable \cite{kolchin1948algebraic,van2003galois}.
For example \cite[Appendix]{kolchin1968algebraic}, the differential Galois group of Bessel's equation $t^2y''+ty'+(t^2- \nu^2)y=0$ over $\mathbb{C}(t)$ is isomorphic to $SL_2(\mathbb{C})$ (not solvable) when $\nu \not\in \frac{1}{2}+\mathbb{Z}$. In other words, Bessel's equation cannot be solved by integrals, exponentials and algebraic functions unless $\nu \in \frac{1}{2}+\mathbb{Z}$.
Computing the differential Galois groups would help us determine the existence of the solutions expressed in terms of elementary functions (integrals, exponentials and algebraic functions) and understand the algebraic relations among the solutions. 

Hrushovski in \cite{hrushovski2002computing}  first proposed an algorithm for computing the differential Galois group of a general linear differential equation over $k(t)$ where $k$ is a computable algebraically closed field of characteristic zero.
Recently, Feng approached finding a complexity bound of the algorithm in \cite{feng2015hrushovski}, which is the degree bound of the polynomials used in the first step of the algorithm for finding a proto-Galois group (see Definition \ref{def: protoGalois}), but not for the whole algorithm. The bound given by Feng is sextuply exponential in the order $n$ of the differential equation.

In this paper, we present a triple exponential degree bound using triangular sets instead of Gr\"obner bases for representing the algebraic sets. 
In general, the degrees of defining equations of a differential Galois group cannot be bounded by a function of $n$ only. For example \cite[Example~1.3.7, page~12]{singer2007introduction}, the differential Galois group of $y'= \frac{1}{mt}y$ over $\mathbb{C}(t)$ is isomorphic to $\mathbb{Z}/m\mathbb{Z}$ where $m$ is a positive integer, which implies that the degree of the defining equation $x^m-1$ is $m$.

A crucial point of Hrushovski's algorithm is that one can find a proto-Galois group which is an algebraic subgroup of $GL_n(k)$, provided that the degree bound of the defining equations of the proto-Galois group is computed.
The differential Galois group can then be recovered from the proto-Galois group (more details in \cite{feng2015hrushovski,hrushovski2002computing}).
Therefore, a bound for the proto-Galois group plays an important role in determining the complexity of Hrushovski's algorithm. 
Following Feng's approach, we prove that such a proto-Galois group exists by constructing a family $\mathcal{F}$ of algebraic subgroups such that the identity component of any algebraic subgroup $H' \subseteq GL_n(k)$ is contained in some $H$ of $\mathcal{F}$ and $[H'H:H]$ is uniformly bounded. 
We also prove that the degrees of the defining equations of any element of $\mathcal{F}$ are bounded by $\bar{d}$ depending on the order $n$ of the given differential equation. This is stated as Theorem \ref{thm:degreekappa3}.
Then by collecting the algebraic subgroups $\bar{H}$ such that there is some $H$ of $\mathcal{F}$ such that $[\bar{H}:H] \le \bar{d}$, we obtain a family $\bar{\mathcal{F}}$ of algebraic subgroups in which one can always find a proto-Galois group for any linear differential equation. Moreover, we give a numerical degree bound of the defining equations of any algebraic subgroup of $\mathcal{\bar{F}}$. This is stated as Corollary \ref{cor: degreebound}.

Using degrees of defining equations of algebraic subgroups to bound $\bar{\mathcal{F}}$, one needs an upper degree bound of the defining equations of the algebraic subgroups of $\mathcal{F}$ and an upper bound of $[\bar{H}:H]$. 
Hence, a double-exponential degree bound for computing Gr\"obner bases would be involved if one chooses to represent an algebraic subgroup by the generating set of its defining ideal (generated by the defining equations).
In order to give a better bound, we represent an algebraic subgroup by the triangular sets (see Definition \ref{def: trirepofalgebraicsubgps}) instead of the generating set in the process of constructing $\mathcal{F}$.
In such a process, we need to take the differences between Gr\"obner bases and triangular sets into account.
We apply Sz\'ant\'o's modified Wu-Ritt type decomposition algorithm \cite{szanto1997complexity,SzantoThesis} which has been proved to be more efficient than computing a Gr\"obner basis and make use of the numerical bound for Sz\'ant\'o's algorithm \cite{amzallag2016complexity} to adapt to the complexity analysis of Hrushovski's algorithm. 
In doing this, we are able to avoid working with Gr\"obner bases to get a better bound of the degrees of the defining equations of the algebraic subgroups of $\mathcal{F}$ which is triple exponential in the order $n$ of the given differential equation. 
Additionally, we are able to not increase the degree bound of the defining equations of the algebraic subgroups of $\bar{\mathcal{F}}$.
Each element $\bar{H}$ of $\bar{\mathcal{F}}$ is a union of at most $\bar{d}$ cosets of some element $H$ of $\mathcal{F}$.
The degree bound for the generating set of the ideal generated by the defining equations of $\bar{H}$ would be raised to an exponent at most $\bar{d}$, which results in a big increase on the degrees of the defining equations of the algebraic subgroups of $\bar{\mathcal{F}}$.
However, this issue has been resolved when expressing the algebraic subgroups by triangular sets. 
More details can be found in sections \ref{sec: 2} and \ref{sec: 3}.

Besides Hrushovski's general algorithm, there are other algorithmic results in the Galois theory of linear differential equations. Kovacic in \cite{kovacic1986algorithm} presented an algorithm for computing the Galois group of a second order linear differential equation. 
The Galois groups of second and third order linear differential equations were studied by Singer and Ulmer in \cite{singer1993galois}. Compoint and Singer in  \cite{compoint1999computing} proposed an algorithm for computing the Galois group if the differential equation is completely reducible. 
The numeric-symbolic computation of differential Galois groups was presented by van der Hoeven in \cite{van2007around}.
For more details on the differential Galois theory from an algorithmic point of view,  readers are referred to \cite{van2005galois} and \cite{singer2007introduction}.

This paper is organized as follows. In sections \ref{sec: 2.1} and \ref{sec: 2.2}, we introduce the notations, definitions and facts from triangular sets and differential Galois groups. 
In section \ref{sec: 3.1}, we state and prove the preparation lemmas which we use in analyzing the complexity of the algorithm.
In section \ref{sec: 3.2}, we present and prove the new complexity bound of Hrushovski's algorithm. In section \ref{sec: 4}, we compare our bounds when $n=2$ with the ones in \cite[Proposition~B.11, Proposition~B.14]{feng2015hrushovski}.

\section{Preliminaries}
\label{sec: 2}

\subsection{Triangular sets and Sz\'ant\'o's algorithm}
\label{sec: 2.1}

\begin{definition}
Fix a monomial ordering with $x_1<x_2< \dots <x_n$ in a polynomial ring $k[x_1, \dots, x_n]$. Let $f \in k[x_1, \dots, x_n]$. Then $\class{(f)}$ denotes the highest indeterminate in $f$.

\end{definition}

\begin{definition}

Fix a monomial ordering with $x_1<x_2< \dots <x_n$ in a polynomial ring $k[x_1, \dots, x_n]$. A sequence of polynomials $\{g_1, \dots, g_m\}$ is called a triangular set if $\class{(g_i)}<\class{(g_j)}$ for all $i<j$.

\end{definition}

The pseudo division is a method of division for multivariate polynomials which is a generalization of the method of division for univariate polynomials.

Let $f \in k[x_1, \dots, x_n]$ and $G = \{g_1, \dots, g_m\}$ be a triangular set in $k[x_1, \dots, x_n]$. We denote the leading coefficient of $g_i$ by $\lc{(g_i)}$. There exist polynomials $q_1, \dots, q_m$  and $\alpha_1, \dots, \alpha_m \in \mathbf{N}$ such that $\lc{(g_1)}^{\alpha_1} \dots \lc{(g_m)}^{\alpha_m}f = \Sigma_{i=1}^m q_i g_i + f_0$ where $\deg_{x_{j_i}} (f_0)< \deg_{x_{j_i}} (g_i)$ if $\class{(g_i)} = x_{j_i}$ for $1 \le i \le m$.
We call $f_0$ the pseudo remainder of $f$ by $G$, denoted by $\prem{(f,G)}$. If $f=\prem{(f,G)}$, then we say that $f$ is reduced modulo $G$.

\begin{definition}
A triangular set $G \subseteq k[x_1, \dots, x_n]$ represents an ideal $I$ if $I=\{f \in k[x_1, \dots, x_n] : \prem{(f, G)} = 0 \}$. We denote the ideal represented by $G$ by $\rep{(G)}$.
\end{definition}

In general, a triangular set is not a generating set of the ideal which it represents and has more zeros than the ideal. Consider a triangular set $G=\{xy\} \subseteq \mathbb{C}[x,y]$ with $x<y$. $G$ has a zero $(0,-2)$ which is not in $\zero{(\rep(G))}$ because $y \in \rep(G)$.

Note that $\rep(G)$ is not necessarily an ideal. Consider a triangular set $G=\{x, xy\} \subseteq \mathbb{C}[x,y]$ with $x<y$. $y$ and $y+1$ are in $\rep(G)$, but $y-(y+1)= -1$ is not in $\rep(G)$.

Sz\'ant\'o in \cite{szanto1997complexity} and \cite{SzantoThesis} presented a Wu-Ritt type decomposition
algorithm expressing the radical of an
ideal as an intersection of radical
ideals represented by unmixed triangular sets. 
An unmixed triangular set is a triangular set with some certain conditions. For our purposes, we state the following theorem without emphasizing the "unmixed" property of triangular sets.

\begin{thm} 
\cite[Theorem~4.1.7]{SzantoThesis}
\label{thm:Szanto} 
Let $I \subseteq k[x_1, \dots, x_n]$ be an ideal. Then there is an algorithm which computes a triangular representation of $I$, that is computing a family of triangular sets $G_i$ such that $\sqrt{I}=\bigcap^m_{i=1} I_i$ where $I_i = \rep{(G_i)}$ for each $i$.
\end{thm}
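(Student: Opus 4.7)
The plan is to describe and verify the Wu--Ritt style decomposition algorithm that underlies Sz\'ant\'o's theorem, following the recursive split on initials of characteristic sets. Given a generating set $\{f_1,\ldots,f_s\}$ of $I$, I would process the input recursively, producing at each step either a triangular set to output or strictly smaller subproblems on which to continue.

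First, I would compute a characteristic set $C=\{c_1,\ldots,c_r\}$ of $\{f_1,\ldots,f_s\}$ with respect to the given variable ordering, using repeated pseudo-division in the spirit of Ritt: polynomials in the current basis are reduced modulo one another according to class, and pseudo-remainders are fed back in. Ritt's classical argument shows this process halts because the total Ritt rank of the basis strictly decreases. Next, let $h=\prod_{i=1}^r \lc(c_i)$ be the product of initials of $C$. A standard application of the Ritt--Wu principle yields
\[
\sqrt{I} \;=\; \sqrt{\langle C\rangle : h^{\infty}}\;\cap\;\sqrt{I+\langle h\rangle},
\]
since the part of $V(I)$ away from $V(h)$ coincides with $V(\langle C\rangle:h^{\infty})$ by the defining property of a characteristic set, while the remaining part is captured by $V(I)\cap V(h)$. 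For a regular (\emph{unmixed}) characteristic set $C$, a separate lemma shows that $\rep(C)$ equals this saturation and is radical, so $C$ itself represents the first component.

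For the second component I would recurse on $I+\langle h\rangle$. Termination of the outer recursion rests on the fact that $h$ involves only variables of strictly smaller class than the largest class appearing in $C$, so the characteristic set produced at any subproblem has strictly smaller Ritt rank than $C$. By the well-ordering of Ritt ranks only finitely many branches are produced, and collecting the triangular sets at the leaves yields the desired family $\{G_i\}$ with $\sqrt{I}=\bigcap_i \rep(G_i)$.

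The main obstacle is the step where $\langle C\rangle:h^{\infty}$ is translated into a triangular-set representation: a raw characteristic set need not be regular, so $\rep(C)$ can fail to equal the saturation. This is precisely where Sz\'ant\'o's modification of the classical Wu--Ritt algorithm enters, performing additional case splits on the vanishing of each initial so that the output triangular sets are unmixed and the identity $\rep(C)=\sqrt{\langle C\rangle:h^{\infty}}$ holds on every branch. Verifying that these extra splittings preserve both correctness (the cases cover all primes above $I$) and termination (they still strictly reduce Ritt rank on each branch) is the most delicate bookkeeping, and I would rely on the detailed analysis in Sz\'ant\'o's thesis to close the argument.
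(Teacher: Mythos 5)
The statement you are proving is Theorem~2.1 in the paper, which is cited verbatim as \cite[Theorem~4.1.7]{SzantoThesis}; the paper offers no proof of it, so there is no in-paper argument to compare against. What you have written is a high-level narrative of the Wu--Ritt decomposition that Sz\'ant\'o's algorithm refines, and you are candid that the crux --- performing the extra case splits so that each output triangular set is \emph{unmixed}, so that $\rep(C)$ actually equals the radical of the saturation on every branch --- is delegated back to Sz\'ant\'o's thesis. Since that refinement is precisely the content of the theorem, your proposal is a correct orientation rather than a self-contained proof, which is a reasonable thing to produce for a black-box citation but should not be mistaken for establishing the result.

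One technical wrinkle worth flagging in your sketch: you recurse on $I+\langle h\rangle$ with $h=\prod_{i}\lc(c_i)$ and assert the resulting characteristic set has strictly smaller Ritt rank. The standard rank-decrease argument uses that each individual initial $\lc(c_i)$ is reduced with respect to $C$ and not contained in the ideal it saturates; the \emph{product} $h$ need not be reduced with respect to $C$, since its degree in a given variable is the sum of the degrees of the factors and can exceed the bound imposed by the corresponding $c_j$. So the rank drop for $I+\langle h\rangle$ does not follow directly from your stated reason. The usual fix is to branch on each $I+\langle \lc(c_i)\rangle$ separately --- these branches still cover $V(I)\cap V(h)$ and preserve the rank argument --- or to first reduce $h$ modulo $C$. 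Also, the decomposition
\[
\sqrt{I}=\sqrt{\langle C\rangle : h^{\infty}}\cap\sqrt{I+\langle h\rangle}
\]
that you invoke is correct, but it deserves a sentence of justification: one must use that $C\subseteq I$ (so $V(I)\subseteq V(C)$) together with the characteristic-set property (that $V(C)\setminus V(h)\subseteq V(I)$) to get $V(I)=V(\langle C\rangle:h^{\infty})\cup V(I+\langle h\rangle)$, and then apply the Nullstellensatz; as written, the identity is asserted rather than derived.
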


\begin{definition}
An ideal $I \subseteq k[x_1, \dots, x_n]$ is bounded by $d$ if any polynomial $p_j$ in the generating set $\{p_1, \dots, p_m\}$ of $I$ has degree not greater than $d$.
\end{definition}

In order to get a numerical complexity bound for Hrushovski's algorithm, we need a numerical complexity bound for Sz\'ant\'o's algorithm, which is stated as the following theorem.

\begin{thm} \cite[Theorem~3.5]{amzallag2016complexity}
\label{thm:SzantoBound} 
Suppose that $n>1$ and $I \subseteq k[x_1,  \dots, x_n]$ is an ideal bounded by $d$. Then all the polynomials appearing during the computations and in the output of Sz\'ant\'o's algorithm have degrees not greater than $nd^{5.5n^3}$.
\end{thm}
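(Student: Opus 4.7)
The plan is to follow the structure of Sz\'ant\'o's modified Wu-Ritt decomposition algorithm and bound the maximum degree of polynomials appearing at every stage. The algorithm recursively picks a characteristic (triangular) subset from the current set of generators, pseudo-reduces all other polynomials modulo that subset, and splits the problem based on vanishing of leading coefficients in order to preserve the unmixedness property. The degree bound should then follow from composing per-step degree estimates through the depth and branching of this recursion.

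I would first establish a single-step degree bound for pseudo-reduction. If $f$ has degree at most $D$ and $G=\{g_1,\dots,g_m\}$ is a triangular set with each $\deg g_i \le D$, then the pseudo-division identity $\prod_i \lc(g_i)^{\alpha_i} f = \sum q_i g_i + \prem(f,G)$ together with the observation that each exponent $\alpha_i$ can be taken no larger than $\deg_{x_{j_i}} f \le D$ forces $\prem(f,G)$ to have degree polynomial in $D$ and $n$, of size roughly $O(nD^2)$ per pass. Next I would bound the recursion depth: since each recursive call resolves one additional class variable, the depth is at most $n$, and at each level the algorithm may branch on leading coefficients, with a Bezout-type count controlling the total contribution of the branches.

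Composing the one-step bound through $O(n)$ recursion levels and $O(n)$ branching width per level gives a degree bound of shape $d^{cn^3}$, with the outer factor $n$ in $nd^{5.5n^3}$ coming from merging the $n$ triangular components at the end. The hard part will be nailing down the sharp constant $c=5.5$ in the exponent: a naive composition of the per-step bound only yields an unspecified polynomial in $n$. To reach $5.5$ one must use the fact that Sz\'ant\'o's algorithm relies on subresultant-based pseudo-division rather than plain pseudo-division, yielding tighter per-step bounds, and one must carefully balance the branching count against the degree growth within each branch. I would finish by a detailed bookkeeping argument carrying through the estimates in \cite{szanto1997complexity, SzantoThesis} along the lines of \cite{amzallag2016complexity}.
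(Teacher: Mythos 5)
This statement is not proved in the paper at all; it is imported verbatim as Theorem~3.5 of the cited reference \cite{amzallag2016complexity}, and the paper simply invokes it as a black box. So there is no ``paper's own proof'' to compare against, and a reader of this paper is not expected to reprove it. With that caveat, your sketch still deserves scrutiny on its own terms, and it has a real gap.

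The problematic step is the claim that ``composing the one-step bound through $O(n)$ recursion levels and $O(n)$ branching width per level gives a degree bound of shape $d^{cn^3}$.'' If a single pass of pseudo-reduction takes degree $D$ to roughly $O(nD^2)$, then iterating this across the $n$ levels of the recursion does not give a polynomial exponent in $n$: squaring the degree at each of $n$ stages yields something on the order of $d^{2^n}$, i.e.\ a \emph{doubly exponential} bound, not $d^{O(n^3)}$. The entire point of Sz\'ant\'o's design, and of the analysis in \cite{amzallag2016complexity}, is that the algorithm does \emph{not} perform naive iterated pseudo-division; it computes subresultant chains and carefully controlled matrix constructions so that the degree growth per elimination stage is \emph{multiplicative by a factor polynomial in $d$}, not a squaring of the current degree. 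The exponent $5.5n^3$ then arises from counting those multiplicative factors across the nested variables and the width-controlling \textbf{unmixed} splitting, via Bezout-type bounds on the number of components. Your plan also defers all of the actual bookkeeping (``I would finish by a detailed bookkeeping argument'') that produces the numerical constant, so even setting aside the composition error the proposal is a program rather than a proof. To repair it you would need to replace the $O(nD^2)$ per-pass estimate with the subresultant degree bounds that Sz\'ant\'o's algorithm actually maintains, and then redo the composition so the exponent stays polynomial in $n$ rather than exponential.
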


\subsection{Differential Galois groups and Hrushovski's algorithm}
\label{sec: 2.2}

We consider a linear differential equation in the matrix form:

\begin{displaymath}
\delta{(Y)}=AY \tag{1} \label{equation:1}
\end{displaymath}
where $Y$ is a vector containing $n$ unknowns and $A$ is an $n×n$ matrix with entries in $k(t)$. 
Denote the Picard-Vessiot extention field of the differential field $k(t)$ by $K$ with the derivation $\delta=\frac{d}{dt}$ and the solution space of $(\ref{equation:1})$ by $V$ in $K$. Let $F \in GL_n(K)$ be a fundamental matrix of (\ref{equation:1}). Let $GL(V)$ be the group of automorphisms of the solution space $V$. Then there is a group isomorphism $\Phi_F$: $GL(V) \rightarrow GL_n(k)$ sending $\sigma \in GL(V)$ to $M_{\sigma} \in GL_n(k)$ where $FM_{\sigma}=\sigma(F)$.

\begin{definition} 
\label{def: GaloisGroup}
The Galois group $\mathcal{G}$ of \textnormal{(\ref{equation:1})} is the group of $k(t)$-automorphisms of $K$ which commutes with the  derivation and fixes $k(t)$ pointwise.
\end{definition}

\begin{definition}
An algebraic subgroup $H$ of $GL_n(k)$ is bounded by $d$ if there exist finitely many polynomials $p_1, \dots, p_m \in k[x_{i,j}]_{1<i,j<n}$ of degrees not greater than $d$ such that $H= \zero{(p_1, \dots, p_m)} \cap GL_n(k)$. 
\end{definition}

Let $H \subseteq GL_n(k)$ be an algebraic subgroup.
Let $H^{0}$ be the identity component of $H$ and $\Phi_F{(\mathcal{G})}^{0}$ be identity component of $\Phi_F{(\mathcal{G})}$. Let $(H^{0})^t$ be the intersection of kernels of all characters of $H^{0}$.

The definition of a proto-Galois group of (\ref{equation:1}) was introduced by Feng in \cite{feng2015hrushovski}, which is as follows:

\begin{definition}[{\cite[Definition 1.1]{feng2015hrushovski}}] \label{def: protoGalois}
If there is an algebraic subgroup $H$ of $GL_n(k)$ such that
\begin{displaymath}
(H^{0})^t\trianglelefteq \Phi_F(\mathcal{G})^{0} \subseteq \Phi_F(\mathcal{G}) \subseteq H,
\end{displaymath}
then $H$ is called a \textnormal{proto-Galois group} of $(\ref{equation:1})$.
\end{definition}
In Hrushovski's algorithm, one can compute an integer $\tilde{d}$ such that there is a proto-Galois group $H$ of $GL_n(k)$ bounded by $\tilde{d}$. The bound $\tilde{d}$ is given by Feng in \cite{feng2015hrushovski}.

\begin{example}
\label{Example1}
Consider the first order linear differential equation $y'= \frac{1}{3t}y$ over $\mathbb{C}(t)$. 
The differential Galois group is the subgroup $\{1, -\frac{1}{2}+\frac{\sqrt3}{2}i, -\frac{1}{2}-\frac{\sqrt3}{2}i \} \subseteq \mathbb{C}^{*}$, where $\mathbb{C}^{*}$ is the multiplicative group of complex numbers \cite[Example~1.3.7, page~12]{singer2007introduction}. 
The identity component of $\mathbb{C}^{*}$ is itself.
The intersection of kernels of all characters of $\mathbb{C}^{*}$ is trivial because the identity map of $\mathbb{C}^{*}$ is a character. 
So in this case $\mathbb{C}^{*}$ is a proto-Galois group. 
\end{example}

\begin{example}
\label{Example2}
Consider the Airy equation $y''=ty$ over $\mathbb{C}(t)$. 
The differential Galois group is the subgroup $SL_2(\mathbb{C}) \subseteq GL_2(\mathbb{C})$ \cite[Example~8.15, page~250]{van2003galois}.
The identity component of $SL_2(\mathbb{C})$ is itself because $SL_2(\mathbb{C})$ is connected.
The identity component of $GL_2(\mathbb{C})$ is itself because $GL_2(\mathbb{C})$ is connected.
A character $\phi$ of $GL_2(\mathbb{C})$ is of the form $\forall g \in GL_2(\mathbb{C})$ $\phi(g)=(\det{(g)})^{n}$ where $\det$ is the determinant map and $n \in \mathbb{N}$.
So the intersection of kernels of all characters of $GL_2(\mathbb{C})$ is $SL_2(\mathbb{C})$.
So $GL_2(\mathbb{C})$ is a proto-Galois group.
From Definition \ref{def: protoGalois}, it is not hard to see the differential Galois group itself is a proto-Galois group.
\end{example}

\begin{example}
\label{Example3}
Consider the second order linear differential equation $y''+\frac{1}{t}y'=0$ over $\mathbb{C}(t)$. 
The differential Galois group is 
\[ 
\bigg \{ \begin{pmatrix} 
1 & c \\
0 & 1 
\end{pmatrix} : c \in \mathbb{C} \bigg \} 
\] 
\cite[Example~4.1, page~90]{crespo2007introduction}.
The same analysis in Example \ref{Example2} shows that the intersection of kernels of all characters of $GL_2(\mathbb{C})$ is $SL_2(\mathbb{C})$. But $GL_2(\mathbb{C})$ is not contained in the identity component of the differential Galois group.
So in this case $GL_2(\mathbb{C})$ cannot be a proto-Galois group.
The intersection of kernels of all characters of $SL_2(\mathbb{C})$ is itself which is not contained in the identity component of the differential Galois group.
So in this case $SL_2(\mathbb{C})$ cannot be a proto-Galois group.
\end{example}

\begin{example}
\label{Example4}
Consider the second order linear differential equation $y''-2ty'-2y=0$ over $\mathbb{C}(t)$. 
The differential Galois group is
\[ 
\bigg \{ \begin{pmatrix} 
a & c \\
0 & b 
\end{pmatrix} : a, b, c \in \mathbb{C}, ab \ne 0 \bigg \} 
\]
\cite[Example~6.10, pages~81,82,83]{Magid94lectureson}.
The same analysis in Example \ref{Example3} shows that
in this case $GL_2(\mathbb{C})$ cannot be a proto-Galois group.
The differential Galois group in this case is not even a subgroup of $SL_2(\mathbb{C})$,
so $SL_2(\mathbb{C})$ cannot be a proto-Galois group.
\end{example}

\begin{rem}
The proto-Galois group of a linear differential equation is not unique. As shown in Examples \ref{Example1} and \ref{Example2},  
the proto-Galois group can be far from the differential Galois group. But a group being large does not make it a proto-Galois group as shown in Examples \ref{Example3} and \ref{Example4}.
\end{rem}

Hrushovski in \cite[Corollary~3.7]{hrushovski2002computing} proved that one can compute an integer $d_3$ such that  there is a proto-Galois group $H$ of $GL_n(k)$ bounded by $d_3$. Feng in \cite[Propostion B.14]{feng2015hrushovski} gave the first explicit bound for $d_3$ which is sextuply exponential in the order $n$ of the given linear differential equation.

To understand the key role of the integer $d_3$ in analyzing the complexity of Hrushovski's algorithm, we separate the algorithm in three main steps following the way in which Feng in \cite{feng2015hrushovski} described it.

\begin{definition}
Let $\tilde{V}=\{Fh : h\in GL_n(k)\}$. A subset $V_0$ of $\tilde{V}$ is defined by finitely many polynomials $p_1, \dots, p_m$ if $V_0 = \zero{(p_1, \dots, p_m)} \cap \tilde{V}$ where $\zero{(p_1, \dots, p_m)}$ denotes the zero set of $\{p_1, \dots, p_m\}$ in $k^{n×n}$. If $p_1, \dots, p_m$ have coefficients in $k$, we say that $V_0$ is $k$-definable subset of $\tilde{V}$.
\end{definition}

\begin{itemize}
\item In the first step, we compute a proto-Galois group $H$ of (\ref{equation:1}) bounded by $d_3$. The existence of $H$ is guaranteed by {\cite[Corollary 3.7]{hrushovski2002computing}}. Let $N_{d_3}(\tilde{V})$ be the set of all subsets of $\tilde{V}$ defined by finitely many polynomials of degrees not greater than $d_3$.
Then one can compute $H$ by the intersection of the stabilizers of $k$-definable elements in $N_{d_3}(\tilde{V})$.

\item In the second step, we compute the identity component $(\Phi_F(\mathcal{G}))^0$ of $\Phi_F(\mathcal{G})$. Let $\chi_1, \dots, \chi_l$ be the generators of the character group of $\big{(} \Phi_F(\mathcal{G})\big{)}^0$. Let $\hat{k}$ be an algebraic extension of $k(t)$. Then $\chi\big{(} \Phi_F(\mathcal{G})^0\big{)}$ is the Galois group of some exponential extension $\hat{K}$ of $\hat{k}$ where $\chi = (\chi_1, \dots, \chi_l)$. $\hat{K}$ can be obtained by computing hyperexponential solutions of some symmetric power system of (\ref{equation:1}). $\big{(}\Phi_F(\mathcal{G})\big{)}^0$ can be found by the pre-image of $\chi\big{(} \Phi_F(\mathcal{G})^0\big{)}$ in $\big{(}\Phi_F(\mathcal{H})\big{)}^0$.

\item In the last step, we compute the differential Galois group $\mathcal{G}$ of (\ref{equation:1}).
Let $\mathcal{G}^{0}$ be the pre-image of $\big{(} \Phi_F(\mathcal{G})\big{)}^0$.
Find a Galois extension $k_G$ of $k(t)$ and a $k_G$-definable subset $V_{k_G}$ of $\tilde{V}$ such that $\mathcal{G}^0$ = $\stab{(V_{k_G})}$ where $\stab{(V_{k_G})}$ is the stabilizer of $V_{k_G}$. Then 
\begin{displaymath}
\mathcal{G} = \bigcup_{i=0}^{m}{\{ \sigma \in GL(V)|\sigma(V_{k_G})=V_i\}}
\end{displaymath}
where $V_i$ is the orbit of $V_{k_G}$ under the action of $\Gal{\big{(}k_G/k(t)\big{)}}$.

\end{itemize}

From the first step of the algorithm, we can see that the integer $d_3$ determines the complexity of computing a proto-Galois group. The differential Galois group is obtained by recovering the proto-Galois group in the next two steps. Therefore $d_3$ plays an important role in determining the complexity of the whole algorithm.

\section{Complexity bound on Hrushovski's algorithm}
\label{sec: 3}

\subsection{Preparation}
\label{sec: 3.1}

\begin{definition}
We say that an ideal $I \subseteq k[x_1, \dots, x_n]$ has a triangular representation if $\sqrt{I}$ is expressed by an intersection of radical ideals $I_i$ such that for each $i$, $I_i=\rep{(G_i)}$ where $G_i$ is a triangular set in $I_i$. A triangular representation of $I$ is bounded by $d$ if every polynomial in $G_i$ has degree not greater than $d$.
\end{definition}

\begin{definition}
\label{def: trirepofalgebraicsubgps}
An algebraic subgroup $H \subseteq GL_{n}(k)$ is said to have a triangular representation if the ideal generated by the defining equations of $H$ has a triangular representation. A triangular representation of $H$ is bounded by $d$ if every polynomial in the triangular representation has degree not greater than $d$.
\end{definition}

Let $I \subseteq k[x_1, \dots, x_n]$ be an ideal.
In \cite[Proposition~B.2]{feng2015hrushovski}, Feng gave a degree bound for $I \cap k[x_1, \dots, x_r]$ which is double-exponential in $n$ using the computation of Gr\"obner bases. In the following lemma, we give a degree bound for the triangular representation of $I \cap k[x_1, \dots, x_r]$ which is polynomial exponential in $n$.

\begin{lem}
\label{lem:elimination}
Assume that $n>1$. Let $I \subseteq k[x_1, \dots, x_n]$ be an ideal bounded by $d$ and $1 \le r \le n$. Then $I \cap k[x_1, \dots, x_r]$ has a triangular representation bounded by $nd^{5.5n^3}$. 

\end{lem}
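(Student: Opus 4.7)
The plan is to apply Szántó's decomposition (Theorem \ref{thm:Szanto}) to $I$ and then cut each output triangular set down to $k[x_1,\dots,x_r]$ by keeping only the polynomials of class $\le x_r$. Concretely, fix the ordering $x_1 < \cdots < x_n$ and run Szántó's algorithm on a generating set of $I$ of degree $\le d$. This yields triangular sets $G_1,\dots,G_m \subseteq k[x_1,\dots,x_n]$ with $\sqrt{I} = \bigcap_{i=1}^{m} I_i$, where $I_i = \rep(G_i)$ is a radical ideal; by Theorem \ref{thm:SzantoBound}, every polynomial appearing in any $G_i$ has degree at most $nd^{5.5n^3}$.

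For each $i$ let $G_i^{(r)} = \{g \in G_i : \class(g) \le x_r\}$, viewed as a triangular set inside $k[x_1,\dots,x_r]$. The key step is to show that $\rep(G_i^{(r)}) = I_i \cap k[x_1,\dots,x_r]$, where the left-hand side is computed inside $k[x_1,\dots,x_r]$. The argument is that when $f \in k[x_1,\dots,x_r]$ is pseudo-reduced by $G_i$, the polynomials $g \in G_i$ with $\class(g) > x_r$ cannot participate in the reduction (the variable they would eliminate does not occur in $f$ or in any previous partial remainder). Hence $\prem(f,G_i) = \prem(f,G_i^{(r)})$ for every $f \in k[x_1,\dots,x_r]$, which gives the equality of sets. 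In particular $\rep(G_i^{(r)})$ is an ideal, and it is radical because $I_i$ is and the intersection of a radical ideal with a subring is radical.

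Combining these facts with the standard identity $\sqrt{I} \cap k[x_1,\dots,x_r] = \sqrt{I \cap k[x_1,\dots,x_r]}$ and the fact that intersection of ideals commutes with intersection with a subring, we obtain
\[
\sqrt{I \cap k[x_1,\dots,x_r]} \;=\; \Bigl(\bigcap_{i=1}^{m} I_i\Bigr) \cap k[x_1,\dots,x_r] \;=\; \bigcap_{i=1}^{m} \rep\bigl(G_i^{(r)}\bigr),
\]
so the $G_i^{(r)}$ constitute a triangular representation of $I \cap k[x_1,\dots,x_r]$. Since $G_i^{(r)} \subseteq G_i$, the degree bound $nd^{5.5n^3}$ is inherited directly from Theorem \ref{thm:SzantoBound}.

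The main place to be careful is the pseudo-remainder identification $\prem(f,G_i) = \prem(f,G_i^{(r)})$ for $f \in k[x_1,\dots,x_r]$; this is essentially routine once one verifies by induction on the reduction steps that no intermediate remainder ever acquires a variable above $x_r$, but it is the only non-formal ingredient in the argument. Everything else is bookkeeping: invoking Theorem \ref{thm:Szanto} for existence of the decomposition, Theorem \ref{thm:SzantoBound} for the numerical bound, and the standard commutation of radicals and of intersections with the subring $k[x_1,\dots,x_r]$.
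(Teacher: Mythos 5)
Your proposal is correct and follows essentially the same route as the paper's proof: apply Sz\'ant\'o's decomposition to $I$, restrict each output triangular set to the polynomials lying in $k[x_1,\dots,x_r]$ (equivalently, those of class at most $x_r$), and argue that for $f\in k[x_1,\dots,x_r]$ the pseudo-remainder is unaffected by dropping the higher-class polynomials, so $\rep(G_i\cap k[x_1,\dots,x_r]) = \rep(G_i)\cap k[x_1,\dots,x_r]$, with the degree bound inherited from Theorem \ref{thm:SzantoBound}. Your added remark that $\rep(G_i^{(r)})$ is then automatically a radical ideal (being the intersection of a radical ideal with a subring) makes explicit a point the paper leaves implicit, but the substance of the argument is the same.
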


\begin{proof}
Assume that $\sqrt{I} = \bigcap_i{I_i}$ is a triangular representation of $I$ where $I_i = \rep{(G_i)}$ and $G_i$ is a triangular set of $I_i$ .
Since 
\begin{displaymath}
\sqrt{I} \cap k[x_1, \dots, x_r]= \sqrt{I \cap k[x_1, \dots, x_r]}, 
\end{displaymath}
it suffices to show that for each $i$
\begin{displaymath}
\rep{\big{(} G_i \cap k[x_1, \dots, x_n]\big{)}} \cap k[x_1, \dots, x_r]= \rep{(G_i)} \cap k[x_1, \dots, x_r] \tag{2} \label{equation:2}
\end{displaymath}
where $\rep{\big{(} G_i \cap k[x_1, \dots, x_n]\big{)}}$ is an ideal in $k[x_1, \dots, x_n]$.
Let $g \in$ LHS of (\ref{equation:2}).
Then 
\begin{displaymath}
\prem{\big{(} g, G_i \cap k[x_1, \dots, x_r]\big{)}}=0. 
\end{displaymath}
If $G_i \subseteq k[x_1, \dots, x_r]$, then 
\begin{displaymath}
\prem{(g, G_i)} = \prem{\big{(} g, G_i \cap k[x_1, \dots, x_r]\big{)}}. 
\end{displaymath}
So $\prem{(g, G_i)} = 0$.
If $G_i \not\subseteq k[x_1, \dots, x_r]$, then $G_i$ must have at least one polynomial containing terms larger than $x_r$. 
Let $G_i = \{g_{i,1}, \dots, g_{i,s}\}$ and assume that $g_{i,j+1}, \dots, g_{i,s}$ contain terms larger than $x_r$. 
Then
\begin{displaymath}
G_i \cap k[x_1, \dots, x_r]=\{g_{i,1}, \dots, g_{i,j}\}
\end{displaymath}
with $j<s$.
Since $g \in k[x_1, \dots, x_r]$, $g$ is reduced modulo $g_{i,j}, \dots, g_{i,s}$. 
Then $\prem{(g, G_i)}=0$. 
So $g \in$ RHS of (\ref{equation:2}). 
Let $f \in$ RHS of (\ref{equation:2}). 
Then $f \in k[x_1, \dots, x_r]$ and $\prem{(f, G_i)} = 0$. Since $f \in k[x_1, \dots, x_r]$, $f$ is reduced modulo polynomials containing terms larger than $x_{r}$ in $G_i$. 
So 
\begin{displaymath}
\prem{\big{(} f, G_i \cap k[x_1, \dots, x_r]\big{)}} = 0.
\end{displaymath}
So $f \in$ LHS of (\ref{equation:2}).
Therefore, $I \cap k[x_1, \dots, x_r]$ has a triangular representation which is 
\begin{displaymath}
\sqrt{I\cap k[x_1, \dots, x_r]} = \bigcap_i{I_i'}
\end{displaymath}
where 
\begin{displaymath}
I_i' = \rep{\big{(} G_i \cap k[x_1, \dots, x_r]\big{)}}
\end{displaymath}
where $\rep{\big{(} G_i \cap k[x_1, \dots, x_r]\big{)}}$ is an ideal in $k[x_1, \dots, x_r]$.
By Theorem \ref{thm:Szanto}, the triangular representation of $I \cap k[x_1, \dots, x_r]$ is bounded by $nd^{5.5n^3}$.
\end{proof}

\begin{lem} \label{lem: IJbound}
Let $I, J \subseteq k[x_1, \dots, x_n]$ be ideals. Assume that $I$ and $J$ have triangular representations bounded by $d$. Then $IJ$ has a triangular representation bounded by $d$. 
\end{lem}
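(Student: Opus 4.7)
The plan is to reduce the problem to the standard identity $\sqrt{IJ}=\sqrt{I}\cap\sqrt{J}$ and then simply concatenate the two given triangular representations.

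First I would write out the hypotheses. Since $I$ has a triangular representation bounded by $d$, there exist triangular sets $G_1,\dots,G_s$, each consisting of polynomials of degree at most $d$, such that $\sqrt{I}=\bigcap_{i=1}^{s}\rep(G_i)$ and each $\rep(G_i)$ is a radical ideal. Similarly, $\sqrt{J}=\bigcap_{j=1}^{t}\rep(H_j)$ for triangular sets $H_1,\dots,H_t$ bounded by $d$, with each $\rep(H_j)$ radical.

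Next I would invoke the elementary commutative algebra identity
\begin{displaymath}
\sqrt{IJ}=\sqrt{I\cap J}=\sqrt{I}\cap\sqrt{J},
\end{displaymath}
where the first equality follows from the sandwich $IJ\subseteq I\cap J$ together with $(I\cap J)^2\subseteq IJ$, and the second equality is standard. Combining with the triangular representations above gives
\begin{displaymath}
\sqrt{IJ}=\Bigl(\bigcap_{i=1}^{s}\rep(G_i)\Bigr)\cap\Bigl(\bigcap_{j=1}^{t}\rep(H_j)\Bigr),
\end{displaymath}
which is an intersection of $s+t$ radical ideals, each of which is represented by one of the triangular sets $G_1,\dots,G_s,H_1,\dots,H_t$. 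Since every polynomial in every $G_i$ and every $H_j$ has degree at most $d$, this is a triangular representation of $IJ$ bounded by $d$.

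There is essentially no obstacle here: the lemma is a purely formal assembly once one observes that a triangular representation is closed under taking finite unions of the underlying families of triangular sets, and that $\sqrt{IJ}=\sqrt{I}\cap\sqrt{J}$ lets us convert a product into such a union. The only thing to be mildly careful about is to verify from the Definition of triangular representation that simply listing the $G_i$'s together with the $H_j$'s meets the definition (radicality of each $\rep(G_i)$ and $\rep(H_j)$ is part of the hypothesis, so nothing new has to be checked), and that the degree bound $d$ is preserved because we are not performing any new polynomial operations.
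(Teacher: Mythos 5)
Your proof is correct and follows essentially the same route as the paper: both use the identity $\sqrt{IJ}=\sqrt{I}\cap\sqrt{J}$ and then concatenate the two given families of triangular sets. The only difference is that you spell out the elementary justification of the radical identity, which the paper omits.
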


\begin{proof}
Suppose that $\sqrt{I} = \bigcap_i{I_i}$ and $\sqrt{J} = \bigcap_j{J_j}$ are triangular representations of $I$ and $J$.
Then $\sqrt{IJ} = \sqrt{I} \cap \sqrt{J} = {\big{(} \bigcap I_i\big{)}} \cap {\big{(} \bigcap J_j\big{)}}$.
So if $I_i = \rep{(G^{I_i})}$ and $J_j = \rep{(G^{J_j})}$ for some triangular sets $G^{I_i}$ and $G^{J_j}$, 
then $\sqrt{IJ} = \big{(} \bigcap \rep{\big{(} G^{I_i})}\big{)} \cap \big{(} \bigcap \rep{(G^{J_j})}\big{)}$.
Therefore, $IJ$ has a triangular representation bounded by $d$.
\end{proof}

\begin{definition}
We say that a family $\mathcal{F}$ of algebraic subgroups in $GL_n(k)$ is represented by a family of triangular sets if any $H \in \mathcal{F}$ has a triangular representation, and $\mathcal{F}$ is bounded by $d$ if the triangular representations of any $H \in \mathcal{F}$ are bounded by $d$.
\end{definition}

Let $H \subseteq GL_n(k)$ be an algebraic subgroup. Let $\tau : H \longrightarrow GL_l (k)$ be a homomorphism where $l$ is a positive integer. Assume that $\tau = (\frac{P_{i,j}}{Q})$, where $P$ and $Q$ are polynomials with coefficients in $k$ and $1 \le i,j \le l$. The homomorphism $\tau$ is said to be bounded by $d$ if the polynomials $P_{i,j}$ and $Q$ have degrees not greater than $d$.

In \cite[Lemma~B.5]{feng2015hrushovski}, Feng gave a degree bound for the generating set of the ideal generated by the defining equations of $\tau^{-1}\big{(} H' \cap \tau(H)\big{)}$ where $H \subseteq GL_n(k)$ and $H' \subseteq GL_l(k)$. We use a similar argument to give in the following lemma a bound for the triangular representation of $\tau^{-1}\big{(} H' \cap \tau(H)\big{)}$.

\begin{lem} \label{lem: inversebound}
Assume that $n>1$. Let $H \subseteq GL_n(k)$ be an algebraic  subgroup whose triangular representation bounded by $d$ and $H' \subseteq GL_l(k)$ be an algebraic subgroup whose triangular representation bounded by $d'$. Assume that the homomorphism $\tau : H \longrightarrow GL_{l}(k)$ is bounded by $m$.
Then $\tau^{-1}\big{(} H' \cap \tau(H)\big{)}$ has a triangular representation bounded by $n\big{(}\max{(d, md')}\big{)}^{5.5n^3}$.
\end{lem}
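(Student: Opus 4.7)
The plan is to construct, for each pair of triangular sets drawn from the representations of $H$ and $H'$, an auxiliary ideal whose generating set has degree at most $\max(d, md')$, then apply Sz\'ant\'o's algorithm and combine the resulting triangular representations by taking a product of ideals.

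First I would fix triangular representations $\sqrt{I_H} = \bigcap_i \rep(G_i)$ and $\sqrt{I_{H'}} = \bigcap_j \rep(G'_j)$, with every polynomial in each $G_i$ (respectively $G'_j$) of degree at most $d$ (respectively $d'$). The decompositions $H = \bigcup_i \zero(\rep(G_i))$ and $H' = \bigcup_j \zero(\rep(G'_j))$ yield
\[
\tau^{-1}\bigl(H' \cap \tau(H)\bigr) \;=\; \bigcup_{i,j} \Bigl( \zero(\rep(G_i)) \cap \tau^{-1}\bigl(\zero(\rep(G'_j))\bigr) \Bigr),
\]
which is the decomposition I would try to realize algebraically piece by piece.

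For each pair $(i,j)$ I would form the ideal $J_{i,j}$ generated by $G_i$ together with the denominator-cleared pullbacks $\{Q^{d'} p(P/Q) : p \in G'_j\}$. Since the polynomials in $G_i$ have degree at most $d$ and each cleared pullback has degree at most $md'$, the generating set of $J_{i,j}$ has degrees bounded by $\max(d, md')$. Applying Theorem \ref{thm:SzantoBound} then produces a triangular representation of $\sqrt{J_{i,j}}$ bounded by $n\bigl(\max(d, md')\bigr)^{5.5n^3}$. Finally, Lemma \ref{lem: IJbound} gives a triangular representation of $\prod_{i,j} J_{i,j}$ with the same bound, and this ideal carves out $\bigcup_{i,j} \zero(J_{i,j})$.

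The main obstacle is showing that $\zero(J_{i,j})$ actually agrees with the piece $\zero(\rep(G_i)) \cap \tau^{-1}(\zero(\rep(G'_j)))$: a triangular set need not generate the ideal it represents, so $\zero(G_i)$ can strictly contain $\zero(\rep(G_i))$, and the cleared pullback generators can acquire spurious components on the locus $\{Q = 0\}$. Handling this cleanly will require either invoking the unmixedness of Sz\'ant\'o's output triangular sets together with the fact that $Q$ is nonvanishing on $H$, or else replacing $J_{i,j}$ by a suitable saturation with respect to $Q$ and the leading coefficients of $G_i$ that preserves the degree bound $\max(d, md')$ before Sz\'ant\'o's algorithm is invoked.
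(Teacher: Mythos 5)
Your proposal is essentially the same strategy as the paper's: pull back the triangular sets of $H'$ through $\tau$, clear denominators, adjoin the triangular sets of $H$, and invoke Theorem \ref{thm:SzantoBound} to get the degree bound $n\bigl(\max(d,md')\bigr)^{5.5n^3}$. The main organizational difference is that you form a separate ideal $J_{i,j}$ for each pair of components and combine them afterwards via Lemma \ref{lem: IJbound}, whereas the paper assembles one single ideal $J$ generated by all the $G_r$ together with all the cleared pullbacks $E_w$, and applies Sz\'ant\'o only once. Your pairwise version is in fact the more careful reading: the zero set of the ideal generated by $\bigcup_r G_r \cup \bigcup_w E_w$ is $\bigcap_{r,w}\zero(G_r\cup E_w)$, while the set you want, $\tau^{-1}(H'\cap\tau(H))$, is the \emph{union} $\bigcup_{r,w}\zero(G_r\cup E_w)$; the paper's notation elides this distinction, and your reduction to a product of ideals $\prod_{i,j}J_{i,j}$ --- whose zero set is genuinely the union --- together with Lemma \ref{lem: IJbound} handles it correctly without changing the numeric bound.

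The ``main obstacle'' you flag at the end is real and is exactly what the paper addresses: the paper cites the unmixedness of Sz\'ant\'o's output (so that $\zero(G_i)=\zero(\rep(G_i))$) and uses that $Q$ is a nonvanishing denominator on $H$, which is the first of your two proposed fixes. So there is no genuine gap in your argument, only an unexecuted but correctly identified verification step. If you choose the saturation route instead, you would need to argue that saturating by $Q$ and the leading coefficients does not blow up the generator degrees before Sz\'ant\'o is invoked; the unmixedness route avoids that complication, which is why the paper takes it.
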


\begin{proof}
Let $I(H)$ be the ideal generated by the defining equations of $H$ and $I(H')$ be the ideal generated by the defining equations of $H'$. 
Let $X$ be the set of indeterminates $x_{\alpha,\beta}$, $1 \le \alpha, \beta \le n$ and Y be the set of indeterminates $y_{\zeta,\eta}$, $1 \le \zeta, \eta \le l$.
Assume that $\tau=(\frac{P_{i,j}}{Q})_{1\le i,j\le l}$ where $P_{i,j}$ and $Q$ are polynomials in $k[X]$.
Assume that $H$ has a triangular representation
\begin{displaymath}
\sqrt{I(H)} =\bigcap_{r}\rep{(G_r)}
\end{displaymath}
and $H'$ has a triangular representation 
\begin{displaymath}
\sqrt{I(H')}= \bigcap_{w}\rep{(F_w)},
\end{displaymath}
where $G_{r}$ are triangular sets in $k[X]$ and $F_{w}$ are triangular sets in $k[Y]$.
$\{G_r\}$ and $\{F_w\}$ in the triangular representations of $H$ and $H'$ computed by Sz\'ant\'o's algorithm are unmixed triangular sets (see \cite[Proposition 6]{szanto1997complexity}) which guarantee 
$\zero{(\bigcup_w F_w)} = H'$ and $\zero{(\bigcup_r G_r)}= H$. 
A subroutine called \textbf{unmixed} can transform a triangular set to an unmixed one (see \cite[Section 4.2]{SzantoThesis}  for more details).
Since $\zero{(\bigcup_w F_w)} = H'$, composing every polynomial in each $F_w$ with $\tau$ and clearing the denominators, 
we can get the sets $E_w$ of polynomials in $k[X]$ such that $\zero{(\bigcup_w E_w)}=\tau^{-1}(H')$.
Since $\zero{(\bigcup_r G_r)}= H$,
$\zero{\big{(} (\bigcup_r G_r) \cup (\bigcup_w E_w)\big{)}}= \tau^{-1}\big{(} H'\cap\tau(H)\big{)}$.
Let $J$ be the ideal generated by $\big{(} (\bigcup_r G_r) \cup (\bigcup_w E_w)\big{)}$.
Thus, $\zero{(J)} =\tau^{-1}\big{(} H'\cap\tau(H)\big{)}$.
Since the degrees of polynomials in $G_r$ are not greater than $d$ and the degrees of polynomials in $E_w$ are not greater than $md'$,
by Theorem \ref{thm:SzantoBound}, $J$ has a triangular representation bounded by $n\big{(}\max{(d, md')}\big{)}^{5.5n^3}$. 
That is, $\tau^{-1}\big{(} H'\cap\tau(H)\big{)}$ has a triangular representation bounded by $n\big{(}\max{(d, md')}\big{)}^{5.5n^3}$.
\end{proof}

In \cite[Proposition~B.6]{feng2015hrushovski}, Feng uniformly bounded the homomorphisms defined in the following lemma by considering the bound for the generating set of the ideal generated by the defining equations of an algebraic subgroup. Instead, we present a bound for such homomorphisms by making use of the bound for the triangular representation of an algebraic subgroup. The proof is similar to the one in \cite[Proposition~B.6]{feng2015hrushovski}.

\begin{lem} \label{lem:boundhom}
Assume that $n>1$. Let $H$ and $H'$ be algebraic subgroups of $GL_n(k)$ such that $H \unlhd H'$. Assume that $H$ has a triangular representation bounded by $d$. Then there exists a homomorphism 
\begin{displaymath}
\tau_{H',H} : H' \longrightarrow GL_{d^{*}}(k)
\end{displaymath}
bounded by $n^{*}$ 
with $\ker(\tau_{H',H})=H$, $d^{*}= \max\limits_{i} \big\{ {{n^2+d \choose d} \choose i}^2 \big\}$ and $n^{*} = d^{*}d{{n^2+d} \choose d}$.
\end{lem}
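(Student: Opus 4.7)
The plan is to adapt the Chevalley-type construction used in Feng's proof of \cite[Proposition~B.6]{feng2015hrushovski} to the triangular-representation setting. Feng's argument bounds the representation in terms of the degree of a generating set of $I(H)$; here I would instead use the bound on the triangular representation of $H$, which only provides polynomials of degree $\leq d$ whose joint vanishing in $GL_n(k)$ is $H$. Let $V \subseteq k[x_{i,j}]_{1 \leq i,j \leq n}$ denote the finite-dimensional $k$-vector space of polynomials of degree at most $d$ in the $n^2$ matrix-entry variables, so that $\dim V = \binom{n^2+d}{d}$. The defining polynomials extracted from the triangular representation of $H$ all lie in $V$.

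First, I would apply the standard Chevalley construction to $H$ inside $GL_n(k)$. Let $W = I(H) \cap V$ and set $i = \dim W$. The natural translation action of $GL_n(k)$ on $V$ preserves degree, and a standard argument shows that $W$ is stabilized (as a subspace) exactly by $H$. Via the Pl\"ucker embedding, this means $H$ is precisely the stabilizer of the line $L = \Lambda^{i} W \subseteq \Lambda^{i} V$.

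Next, I would convert this line stabilizer into the kernel of a representation, using the normality $H \unlhd H'$ together with a tensor construction. The rank-one element $\iota \in L \otimes L^{*} \subseteq \Lambda^{i} V \otimes (\Lambda^{i} V)^{*}$ is fixed by $H$: indeed, $H$ acts on the one-dimensional space $L$ by some character $\chi$ and on $L^{*}$ by $\chi^{-1}$, so the two actions cancel on $\iota$. By normality, $H$ then acts trivially on the entire $H'$-orbit of $\iota$, hence on its linear span $U$; conversely, any $g \in H'$ fixing $\iota$ must stabilize the line $L$ and therefore lies in $H$. This produces a homomorphism $\tau_{H',H} : H' \to GL(U)$ with $\ker(\tau_{H',H}) = H$. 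Since $\dim U \leq \dim \Lambda^{i} V \cdot \dim (\Lambda^{i} V)^{*} = \binom{\binom{n^2+d}{d}}{i}^{2}$ and the value of $i$ is not known a priori in terms of $n$ and $d$ alone, the uniform dimension bound is $d^{*} = \max_{i} \binom{\binom{n^2+d}{d}}{i}^{2}$.

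The hard part is the degree bookkeeping. One must track how the polynomial degrees in the entries of $g \in H'$ grow through the chain of constructions: the translation action on $V$ produces matrix entries that are polynomial of degree $d$ in $g$; passing to $\Lambda^{i} V$ multiplies by $i \leq \binom{n^2+d}{d}$; and tensoring with the dual and coordinatizing the representation on the $H'$-span $U$ (of dimension at most $d^{*}$) give the final bound $n^{*} = d^{*} d \binom{n^2+d}{d}$. This tracking parallels the argument in Feng's proof, the only substantive difference being the source of the polynomial defining equations of $H$ (a triangular representation rather than a generating set of $I(H)$).
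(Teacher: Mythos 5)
Your proposal is correct and takes essentially the same route as the paper: both adapt Feng's \cite[Lemma~B.6]{feng2015hrushovski} by replacing the space of degree-$\le d$ elements of a Gr\"obner basis with the space $k[x_{i,j}]_{\le d}$ containing the triangular-set polynomials (which cut out $H$ set-theoretically), then running the Chevalley argument on the $l$th exterior power $\bigwedge^{l} k[x_{i,j}]_{\le d}$ and taking the max over the unknown dimension $l$ to get $d^{*}$. The paper actually stops earlier than you do, setting up the space $I(H)=\{P\in k[x_{i,j}]_{\le d}: P(H)=0\}$, the exterior power $E$, and the line $\bigwedge^{l} I(H)=kv$, and then invoking ``a similar argument in the proof of \cite[Lemma~B.6]{feng2015hrushovski}'' for the rest; your write-up fills in the deferred steps (the Pl\"ucker line $L$, the fixed vector $\iota\in L\otimes L^{*}$, normality giving $H$ in the kernel, and the converse identification of the kernel with $H$) in exactly the way Feng's B.6 does. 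The one place both accounts are sketchy is the degree bookkeeping that produces the factorization $n^{*}=d^{*}d\binom{n^2+d}{d}$ — in particular where the factor $d^{*}$ comes from when passing to the dual/tensor representation — but since the paper itself delegates this to Feng's B.6, your treatment is at the same level of rigor as the source.
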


\begin{proof}
The existence of such a homomorphism is guaranteed by \cite[Theorem, page 82]{humphreys}.
Let $G(H)$ be the family of triangular sets in a triangular representation of $H$. Then $G(H)$ is a $k$-vector space with a finite dimension. 
Let $k[x_{i,j}]_{ \le d}$ be the set of polynomials of degrees not greater than $d$ where $1 \le i,j \le n$.  
Let $I(H)=\{P(x_{i,j})\in k[x_{i,j}]_{\le d}|P(H)=0\}$ and $l=\dim_{k}(I(H))$.
Let 
\begin{displaymath}
E= \bigwedge^l k[x_{i,j}]_{ \le d}, 1 \le i,j \le n
\end{displaymath}
which is the $l$th exterior power of $k[x_{i,j}]_{\le d}$.
Since $k[x_{i,j}]_{ \le d}$ is a $k$-vector space with dimension ${ n^2+d} \choose d$, 
$\dim_{k}(E) = {{n^2+d \choose d} \choose l}$ and $\bigwedge^l C(H) = kv$ for some $v \in E$ where  $\bigwedge^l C(H)$ is the $l$th exterior power of $C(H)$. 
By a similar argument in the proof of \cite[Lemma~B.6]{feng2015hrushovski}, we can construct a desired homomorphism bounded by $n^*$.
\end{proof}

Let $U$ be a subgroup generated by unipotent elements of $GL_n(k)$.
In \cite[Lemma~B.8]{feng2015hrushovski}, Feng gave a degree bound for $U$ which is double exponential in $n$. In the following lemma, we bound the triangular representation of $U$. The bound we give is polynomial exponential in $n$.

\begin{lem} 
\label{lem: unipotent}
Assume that $n>1$. Let $U$ be a subgroup generated by unipotent elements of $GL_n(k)$. Then $U$ has a triangular representation bounded by \begin{displaymath}
3n^2\big{(} 2n^2(n-1)\big{)}^{148.5n^6}.
\end{displaymath}
\end{lem}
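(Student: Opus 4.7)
The plan is to follow the overall structure of Feng's proof of \cite[Lemma~B.8]{feng2015hrushovski} but to replace the final Gröbner basis bound by Lemma \ref{lem:elimination}, so that the dependence on $n$ comes out polynomial--exponential rather than double--exponential.

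First I would realize $U$ as the image of a single polynomial parametrization. Every unipotent element has the form $u = I+N$ with $N^n = 0$, and the Zariski closure of the cyclic group $\langle u\rangle$ is the one--parameter subgroup
\begin{displaymath}
U_u = \Bigl\{ I + tN + \tfrac{t^2}{2!}N^2 + \cdots + \tfrac{t^{n-1}}{(n-1)!}N^{n-1} : t\in k\Bigr\},
\end{displaymath}
whose entries are polynomials of degree at most $n-1$ in the single parameter $t$. Since the algebraic subgroup of $GL_n(k)$ generated by a family of irreducible varieties containing the identity equals the product of at most $2\dim GL_n = 2n^2$ of them (a standard fact from \cite{humphreys}), one can choose unipotents $u_1,\dots,u_{2n^2}$ so that
\begin{displaymath}
U \;=\; U_{u_1}U_{u_2}\cdots U_{u_{2n^2}}.
\end{displaymath}
Hence $U$ is the image of the polynomial map $\phi:\mathbb{A}^{2n^2}_k\to GL_n(k)$ sending $(t_1,\dots,t_{2n^2})$ to the matrix product of the corresponding truncated exponentials. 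Multiplying $2n^2$ matrices each of whose entries have degree at most $n-1$, every entry $\phi_{\alpha,\beta}$ is a polynomial in $(t_1,\dots,t_{2n^2})$ of total degree at most $2n^2(n-1)$.

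Next I would recover the defining ideal of $U$ by elimination. Let $X=(x_{\alpha,\beta})_{1\le \alpha,\beta\le n}$ and consider the ideal
\begin{displaymath}
J \;=\; \bigl(\,x_{\alpha,\beta}-\phi_{\alpha,\beta}(t_1,\dots,t_{2n^2}) : 1\le \alpha,\beta\le n\,\bigr)\;\subseteq\; k[X,t_1,\dots,t_{2n^2}].
\end{displaymath}
Then the ideal of $U$ equals $\sqrt{J\cap k[X]}$. The polynomial ring in which $J$ sits has $N=n^2+2n^2=3n^2$ variables, and $J$ is bounded by $\max(1,\,2n^2(n-1)) = 2n^2(n-1)$ since every generator $x_{\alpha,\beta}-\phi_{\alpha,\beta}$ has that degree for $n\ge 2$. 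Applying Lemma \ref{lem:elimination} with $r=n^2$ to $J$ gives a triangular representation of $J\cap k[X]$ bounded by
\begin{displaymath}
N\cdot\bigl(2n^2(n-1)\bigr)^{5.5 N^3}
\;=\; 3n^2\cdot\bigl(2n^2(n-1)\bigr)^{5.5\,(3n^2)^3}
\;=\; 3n^2\bigl(2n^2(n-1)\bigr)^{148.5n^6},
\end{displaymath}
which is precisely the claimed bound.

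The substantive step is the first one: checking that the classical $2\dim$ bound on the number of one--parameter unipotent factors is available, and that the resulting parametrization really does have the degree $2n^2(n-1)$ needed to hit the $148.5n^6$ exponent on the nose. Once that is set up, the proof reduces to plugging the parametrization ideal $J$ into Lemma \ref{lem:elimination}, which is straightforward.
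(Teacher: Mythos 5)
Your proposal is correct and follows essentially the same route as the paper: decompose $U$ into a product of at most $2\dim U\le 2n^2$ one-parameter unipotent subgroups (via the two facts from \cite{humphreys}), observe that the resulting parametrization ideal lives in $3n^2$ variables with generators of degree at most $2n^2(n-1)$, and then apply Lemma \ref{lem:elimination}. The only difference is cosmetic: you spell out the elimination-ideal setup ($J$ and $J\cap k[X]$) that the paper leaves implicit in the phrase ``the defining equations of $U$ contain at most $3n^2$ variables,'' and you note the closedness of the image, which is needed but unstated in both versions.
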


\begin{proof}
By \cite[Lemma~C, page 96]{humphreys}, any one-dimensional subgroup $H$ generated by unipotent elements of $GL_n(k)$ has the form
\begin{displaymath}
H=\{ I_n+Mx+\frac{M^2}{2!}x^2+ \dots +\frac{M^{n-1}}{(n-1)!}x^{n-1} : x \in \mathbb{C} \}
\end{displaymath}
where $M \in \text{Mat}_n(k)$ with $M^n=0$.
By \cite[Proposition, page 55]{humphreys}, $U$ is a product of at most $2\dim{(U)}$ one-dimensional subgroups generated by unipotent elements.
Hence,
\begin{displaymath}
U= \prod\limits_{i=1}^{2\dim{(U)}}{H_i}
\end{displaymath}
where $H_i = \{  I_n+M_ix_i+\frac{M_i^2}{2!}x_i^2+ \dots +\frac{M_i^{n-1}}{(n-1)!}x_i^{n-1}: x_i \in \mathbb{C} \}$ is a one-dimensional subgroup generated by unipotent elements of $GL_n(k)$ and $M_i \in \text{Mat}_n(k)$ with $M_i^n=0$.
Since $\dim{(U)} \le n^2$, the defining equations of $U$
contain at most $3n^2$ variables and have degrees not greater than $2n^2(n-1)$. 
By Lemma \ref{lem:elimination}, the ideal generated by the defining equations of $U$ has a triangular representation bounded by 
\[
3n^2\big{(} 2n^2(n-1)\big{)}^{5.5(3n^2)^3} = 3n^2\big{(}2n^2(n-1)\big{)}^{148.5n^6}.
\qedhere
\]
\end{proof}

Jordan in \cite{Jordan1877} proved that there exists a positive integer $J(n)$ depending on $n$ such that every finite subgroup of $GL_n(k)$ contains a normal abelian subgroup of index at most $J(n)$.
Schur in \cite{schur1911gruppen} provided an explicit bound which is 
\begin{displaymath} 
J(n) \leq \big{(} \sqrt{8n}+1\big{)}^{2n^2}-\big{(} \sqrt{8n}-1\big{)}^{2n^2}.
\end{displaymath}
We use Schur's bound in our computations.
Assume that $n>1$.
Let
\begin{displaymath}
D=3n^2\big{(}2n^2(n-1)\big{)}^{148.5n^6} \tag{3} \label{inequalityD:2},
\end{displaymath}

\begin{displaymath}
d_{1} = \max\limits_{i} \bigg\{{{n^2+D \choose D} \choose i}^2 \bigg\} \tag{4}
\label{inequalityd:1}, 
\end{displaymath}

\begin{displaymath}
d_{2} = d_1D{n^2+D \choose D} \tag{5} \label{inequalityd:3},
\end{displaymath}

\begin{displaymath}
d_{3} = n\Bigg{(} d_{2} (d_{1}^{2}+1)\max\limits_{i}\bigg\{{d_{1}^{2}+1 \choose i}^{2}\bigg\}\Bigg{)}^{5.5n^3} \tag{6}
\label{inequalityd:4}, 
\end{displaymath}
and 
\begin{displaymath}
\bar{d} = J\Bigg(\max\limits_{i}\bigg\{{d_{1}^{2}+1 \choose i}^{2}\bigg\}\Bigg) \tag{7}
\label{inequality: 5}.
\end{displaymath}
Next we give numerical bounds for $D, d_{1}, d_{2}, d_{3}$ and $\bar{d}$ which will be used in the following theorems.
Since
\begin{displaymath}
D=3n^2\big{(}2n^2(n-1)\big{)}^{148.5n^6} \le 3n^2(2n^3)^{148.5n^6}
\end{displaymath}
and
\begin{displaymath}
{{n^2+D}  \choose {n^2}} \le \bigg{(} \frac{e(n^2+D)}{n^2}\bigg{)}^{n^2} \le (e+3e(2n^3)^{148.5n^6})^{n^2}
\le 18^{n^2}(2n^3)^{148.5n^8},
\end{displaymath}

\begin{displaymath}
d_{1} \le \big(2^{n^2+D \choose D}\big)^2 \le \big(2^{18^{n^2}(2n^3)^{148.5n^8}}\big)^2 
\le \big(2^{(2n^3)^{149n^8}}\big)^2 \le 4^{(2n^3)^{149n^8}},
\end{displaymath}

\[
d_{2} \le 4^{(2n^3)^{149n^8}} 3n^2(2n^3)^{148.5n^6} 18^{n^2}(2n^3)^{148.5n^8} \\
= 3n^2 18^{n^2} 4^{(2n^3)^{149n^8}}  (2n^3)^{(148.5n^8+148.5n^6)},
\]

\[
\begin{split}
d_{3} 
\le &n\bigg(3n^218^{n^2}(2n^3)^{148.5n^8+148.5n^6} 4^{(2n^3)^{149n^8}} \big(16^{(2n^3)^{149n^8}} +1 \big) 4^{\big(16^{(2n^3)^{149n^8}} +1 \big)} \bigg)^{5.5n^3}\\
\le &n\bigg((2n^3)^{149n^8+149n^6} 4^{(2n^3)^{149n^8}} \big(16^{(2n^3)^{149n^8}} +1 \big) 4^{\big(16^{(2n^3)^{149n^8}} +1 \big)} \bigg)^{5.5n^3}\\
\le &n\bigg(2(2n^3)^{149n^8+149n^6} 4^{(2n^3)^{149n^8}} 16^{(2n^3)^{149n^8}} 4^{\big(16^{(2n^3)^{149n^8}} +1 \big)} \bigg)^{5.5n^3} \\ 
\le &n\bigg(8(2n^3)^{149n^8+149n^6} 4^{(2n^3)^{149n^8}} 16^{(2n^3)^{149n^8}} 4^{16^{(2n^3)^{149n^8}}} \bigg)^{5.5n^3} \\
\le &n\big(8^{16^{(2n^3)^{149n^8}}} \big)^{5.5n^3}
= n8^{5.5n^316^{(2n^3)^{149n^8}}},
\end{split}
\]
and
\[
\begin{split}
\bar{d} 
\le &\big( \sqrt{8 \cdot 4^{d_1^2+1}}+1\big)^{2n^2}-\big(\sqrt{8 \cdot 4^{d_1^2+1}}-1\big)^{2n^2} \\
\le &\big(\sqrt{32} \cdot 2^{8^{(2n^3)^{149n^8}}}+1 \big)^{2n^2} - \big(\sqrt{32} \cdot 2^{8^{(2n^3)^{149n^8}}}-1 \big)^{2n^2}\\
\le &\big(2\sqrt{32} \cdot 2^{8^{(2n^3)^{149n^8}}} \big)^{2n^2} = 2^{10n^2}4^{n^2 8^{(2n^3)^{149n^8}}}
\end{split}
\]
where $n>1$.

\subsection{Complexity of Hrushovski's Algorithm}
\label{sec: 3.2}

In this section, when we say that a family $\mathcal{F}$ of algebraic subgroups of $GL_n(k)$ is bounded by an integer we mean that the triangular representations of all elements in $\mathcal{F}$ are bounded by that integer.
We prove the following theorem and corollaries following the way in which Feng proved \cite[Proposition~B.11, Lemma~B.12, Lemma~ B.13]{feng2015hrushovski}.
But in order to improve the bounds, we replace the use of Gr\"obner bases with the triangular representations.
Our main result is stated as the following theorem.

\begin{thm}
\label{thm:degreekappa3}
Assume that $n>1$. There is an integer 
\begin{displaymath}
\bar{d} \le 2^{10n^2}4^{n^2 8^{(2n^3)^{149n^8}}}
\end{displaymath}
and a family $\mathcal{F}$ of algebraic subgroups of $GL_n(k)$ whose triangular representations are  bounded by 
\begin{displaymath}
d_{3} \le  n8^{5.5n^316^{(2n^3)^{149n^8}}}
\end{displaymath}
with the following properties: for every algebraic subgroup $H'\subseteq GL_{n}(k)$, there exists an algebraic subgroup $H$ of $\mathcal{F}$ such that 
\begin{itemize}
\item []
(a) $(H')^{\circ} \subseteq H$
\item[]
(b) $H \trianglelefteq H'H \subseteq GL_n(k)$
\item[] 
(c) $[H' : H \cap H']= [H'H : H] \leq \bar{d}$
\item[]
(d) Every unipotent element of $H$ is in $(H')^{\circ}$
\end{itemize}
where $(H')^{\circ}$ is the identity
component of $H'$.
\end{thm}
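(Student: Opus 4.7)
The approach follows Feng's strategy for \cite[Proposition~B.11]{feng2015hrushovski}, replacing all Gr\"obner-basis degree estimates with the triangular-representation bounds from Lemmas \ref{lem: unipotent}, \ref{lem:boundhom}, and \ref{lem: inversebound}. Given an arbitrary $H' \subseteq GL_n(k)$, I would begin by letting $\mathcal{U}$ denote the subgroup of $(H')^{\circ}$ generated by its unipotent elements. Since the set of unipotent elements is stable under conjugation, $\mathcal{U}$ is characteristic in $(H')^{\circ}$ and hence normal in $H'$, and by Lemma \ref{lem: unipotent} its triangular representation is bounded by $D$. Applying Lemma \ref{lem:boundhom} to the pair $\mathcal{U} \trianglelefteq H'$ supplies a polynomial homomorphism $\tau \colon H' \to GL_{d_1}(k)$ bounded by $d_2$ with kernel $\mathcal{U}$.

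Writing $G := \tau(H')$, the identity component $G^{\circ} = \tau((H')^{\circ}) = (H')^{\circ}/\mathcal{U}$ is a connected algebraic group with no nontrivial unipotent elements, hence a torus in $GL_{d_1}(k)$. After post-composing $\tau$ with a conjugation in $GL_{d_1}(k)$ (a degree-one operation that preserves the bound $d_2$), I may assume $G^{\circ}$ is the diagonal torus, so that its defining ideal is generated by the linear equations $\{x_{ij} = 0\}_{i\ne j}$ and has a triangular representation bounded by $1$. A second application of Lemma \ref{lem:boundhom} to $G^{\circ} \trianglelefteq G$ inside $GL_{d_1}(k)$ with $d = 1$ then supplies a polynomial homomorphism $\tau' \colon G \to GL_{m_0}(k)$ bounded by $(d_1^2+1)m_0$ with kernel $G^{\circ}$, where $m_0 = \max_i \binom{d_1^2+1}{i}^2$. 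Because $G^{\circ}$ is the identity component of $G$, the image $\tau'(G)$ is finite, and the Jordan--Schur theorem with Schur's explicit bound produces a normal abelian subgroup $A \trianglelefteq \tau'(G)$ of index at most $J(m_0) = \bar d$.

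I then define $H := (\tau' \circ \tau)^{-1}(A)$ and verify the four properties: (a) $(H')^{\circ} \subseteq H$ since $(\tau' \circ \tau)((H')^{\circ}) = \tau'(G^{\circ}) = \{I\} \subseteq A$; (b) $H \trianglelefteq H'$ since $A \trianglelefteq \tau'(G)$, so $H \trianglelefteq H'H = H'$; (c) $[H':H] = [\tau'(G):A] \le \bar d$; and (d) any unipotent $u \in H$ has $\tau(u) \in G$ unipotent, and since $G/G^{\circ}$ is finite, some power $\tau(u)^N$ lies in the torus $G^{\circ}$, forcing $\tau(u)^N = I$; in characteristic zero this forces $\tau(u) = I$, and hence $u \in \ker(\tau) = \mathcal{U} \subseteq (H')^{\circ}$. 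The triangular bound $d_3$ on $H$ inside $GL_n(k)$ is then obtained by applying Lemma \ref{lem: inversebound} to the composite pre-image, so that the combined input $d_2(d_1^2+1)m_0$ from the two Chevalley homomorphisms feeds into Szánt\'o's Theorem \ref{thm:SzantoBound} to give exactly $d_3 = n\bigl(d_2(d_1^2+1)m_0\bigr)^{5.5n^3}$. The family $\mathcal{F}$ is then the collection of all algebraic subgroups of $GL_n(k)$ with triangular representations bounded by $d_3$. The main obstacle will be this last step: I must verify that the composite pre-image $(\tau' \circ \tau)^{-1}(A)$ admits the stated triangular representation inside $GL_n(k)$ via a \emph{single} application of Lemma \ref{lem: inversebound}, rather than two nested applications, since a naive nested use would square the Szánt\'o exponent $5.5n^3$ and erase the improvement over Feng's sextuply-exponential Gr\"obner-basis bound.
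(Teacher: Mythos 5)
Your high-level strategy is the right one --- two Chevalley quotients to reduce to the Jordan--Schur theorem, and a single Szántó call at the end --- and you correctly anticipate that a naive nested application of Szántó's bound would ruin the improvement. But the specific construction you propose has two genuine gaps that the paper avoids, and both sit upstream of the nesting concern you flag.

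First, you pull back the normal abelian subgroup $A \trianglelefteq \tau'(G)$ \emph{itself}. The triangular (or any) representation of a finite abelian subgroup of $GL_{m_0}(k)$ is not uniformly bounded: a cyclic group of order $m$ already needs a degree-$m$ equation. The paper instead takes the \emph{intersection of all maximal tori containing} the abelian piece; since each maximal torus is a conjugate of the diagonal torus and hence cut out by linear equations, that intersection has a triangular representation bounded by $1$, and this is exactly what makes the pullback degrees controllable. For the same reason your claim that after conjugation $G^{\circ}$ ``is the diagonal torus'' with defining ideal $\{x_{ij}=0\}_{i\neq j}$ is incorrect: $G^{\circ}$ is generically a \emph{proper} subtorus of the diagonal torus, cut out by monomial equations $\prod x_{ii}^{a_i}=1$ of arbitrary degree. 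Again the paper sidesteps this by replacing $G^{\circ}$ with the intersection of maximal tori containing it (the group called $T$ in the second case), which is linear.

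Second, your Chevalley maps $\tau$ and $\tau'$ have domains $H'$ and $G=\tau(H')$, and your $H=(\tau'\circ\tau)^{-1}(A)$ is therefore a subset of $H'$. To realize $H$ as an algebraic subset of $GL_n(k)$ and feed it into Lemma~\ref{lem: inversebound}, one must also include the defining equations of the source $H'$ (this is exactly what the sets $G_r$ do in that lemma's proof), and those are unbounded since $H'$ is arbitrary. The paper uses the normalizers $N=N_{GL_n(k)}(H'_u)$ and $S$ as the domains instead of $H'$ and $G$, precisely so the source has a representation controlled by the bounded normal subgroup rather than by $H'$. A corollary of this choice is that in the paper $H$ is generally \emph{not} contained in $H'$ --- it is a preimage inside $N$ --- which is why the paper writes $[H'H:H]$ rather than $[H':H]$, whereas in your construction $H\subseteq H'$ forces $\bar H = H'H = H'$ and (together with the preceding gaps) would assign a triangular bound $d_3$ to arbitrary $H'$, which cannot hold. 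Patching your proof amounts to importing both fixes, at which point it becomes the paper's three-case argument (finite, torus identity component, general) run bottom-up.
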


\begin{proof} 
In the first case we assume that $H'$ is a finite subgroup in $GL_n(k)$.
Since every finite subgroup of $GL_n(k)$ contains a normal abelian subgroup of index at most $J(n)$, we choose such a normal abelian subgroup $\tilde{H'} \subseteq H'$. $\tilde{H'}$ is diagonalizable, so $\tilde{H'}$ is in some maximal torus of $GL_n(k)$.
Let $H$ be the intersection of maximal tori containing $\tilde{H'}$ in $GL_n(k)$.
We prove that $H$ satisfies $(a)-(d)$ for $H'$.
$(a)$ is true because of the construction of $H$.
$(b)$ is true because $H$ normalizes $H'$.
$(c)$ is true because 
\begin{displaymath}
[H'H:H]=[H':H\cap{H'}] \le [H':\tilde{H'}]\le J(n).
\end{displaymath}
So we can choose $\bar{d}=J(n)$.
(d) is true because there is only one unipotent element of $H$ which is the identity.
Let $\mathcal{F}$ be the family of all the  intersections of maximal tori in $GL_n(k)$. 
Then $\mathcal{F}$ is the desired family of algebraic subgroups of $GL_n(k)$ with $d_3=1$.
  
In the second case we assume that $H'$ is a subgroup whose identity component is a torus. 
Let $T$ be the intersection of all maximal tori containing $(H')^{\circ}$ in $GL_n(k)$.
Then $T$ has a triangular representation bounded by 1.
Let $S$ be  the normalizer of $T$ in $GL_n(k)$.
By Lemma \ref{lem:boundhom}, there is a homomorphism 
\begin{displaymath}
\tau_{S,T} : S \longrightarrow GL_{n'}(k)
\end{displaymath}
bounded by 
\begin{displaymath}
(n^2+1)\max_{i}{\bigg\{{n^2+1 \choose i}^2\bigg\}}
\end{displaymath}
such that $\ker{(\tau_{S,T})} = T$ and $n' = \max_{i}{\big\{{n^2+1 \choose i}^2\big\}}$. 
Since the identity component of $H'$ is contained in $T$,
$\tau_{S,T}(H')$ is a finite subgroup of $GL_{n'}(k)$. 
Let $\mathcal{F}_1$ be the family of all the intersections of maximal tori of $GL_{n'}(k)$. 
By the first case, there exists $H_{\mathcal{F}_1} \in \mathcal{F}_1$ such that $(a)-(c)$ are true for $\tau_{S,T}(H')$ with $\bar{d}=J(n')$.
Let 
\begin{displaymath}
H = \tau_{S,T}^{-1}\big{(}\tau_{S,T}(S) \cap H_{\mathcal{F}_1}\big{)}.
\end{displaymath}
We prove that $(a)-(d)$ are true for $H$ and $H'$.
Since the identity component $(H')^{\circ}$ of $H'$ is a torus and $T \subseteq H$, $(H')^{\circ} \subseteq H$. 
This proves $(a)$. 
Let $h' \in H'$. 
Then 
\begin{displaymath}
\tau_{S,T}(h'Hh'^{-1})=\tau_{S,T}(h')\big{(}\tau_{S,T}(S) \cap H_{\mathcal{F}_1}\big{)}\tau_{S,T}(h'^{-1}).
\end{displaymath}
Since $H_{\mathcal{F}_1} \trianglelefteq \tau_{S,T}(H')H_{\mathcal{F}_1}$,
\begin{displaymath}
\tau_{S,T}(h')\big{(}\tau_{S,T}(S) \cap H_{\mathcal{F}_1}\big{)}\tau_{S,T}(h'^{-1}) = \tau_{S,T}(S) \cap H_{\mathcal{F}_1} = \tau_{S,T}(H).
\end{displaymath}
So $h'Hh'^{-1} \subseteq H$ and $H \trianglelefteq H'$.
Hence, $H \trianglelefteq H'H \subseteq GL_n(k)$. This proves $(b)$.
Since $H' \subseteq S$ and
$[\tau_{S,T}(H'):H_{\mathcal{F}_1} \cap \tau_{S,T}(H')] \le J(n')$,
\begin{displaymath}
\begin{split}
[H':H \cap H']=[H'H:H]=[\tau_{S,T}(H'H):\tau_{S,T}(H)]
= [\tau_{S,T}(H')\tau_{S,T}(H):\tau_{S,T}(H)]
\\
=[\tau_{S,T}(H'):\tau_{S,T}(H) \cap \tau_{S,T}(H')]=[\tau_{S,T}(H'):H_{\mathcal{F}_1} \cap \tau_{S,T}(H')] \le J(n').
\end{split}
\end{displaymath}
We can choose $\bar{d}=J(n')=J\big{(}\max_{i}{\big\{{n^2+1 \choose i}^2 \big\}}\big{)}$. 
This proves $(c)$.
Let $h \in H$ be a unipotent element. Then $\tau_{S,T}(h)$ is a unipotent element in $H_{\mathcal{F}_1}$. 
Since every element in $H_{\mathcal{F}_1}$ is semi-simple, $\tau_{S,T}(h)=1$. So $h$ must be in $\ker{(\tau_{S,T})} = T$. 
By the definition of $T$, $T$ is in some torus of $GL_n(k)$. So $h=1$. 
Hence, every unipotent element of $H$ is in $(H')^{\circ}$.
This proves $(d)$.
By Lemma \ref{lem: inversebound}, $H$ has a triangular representation bounded by 
\begin{displaymath}
n(n^2+1)^{5.5n^3} \max_{i}\bigg\{{n^2+1 \choose i}^{11n^3}\bigg\}.
\end{displaymath}
Let $\mathcal{F}$ be the family of such subgroups $H$. Then $\mathcal{F}$ is the desired family with
\begin{displaymath}
d_3 \le n(n^2+1)^{5.5n^3} \max_{i}\bigg\{{n^2+1 \choose i}^{11n^3}\bigg\}.
\end{displaymath}
The general case is proved as follows.
Let $H'_u$ be the intersection of kernels of all characters of $(H')^{\circ}$. $(H')^{\circ}$ is a connected subgroup of $GL_n(k)$, so $H'_u$ is generated by all unipotent elements by \cite[Lemma~B.10]{feng2015hrushovski}. 
By Lemma \ref{lem: unipotent}, $H'_u$ has a triangular representation bounded by $D$. 
Let $N$ be the normalizer of $H'_u$ in $GL_n(k)$. 
By Lemma $\ref{lem:boundhom}$, there exists a homomorphism 
\begin{displaymath}
\tau_{N,H'_u}:N \longrightarrow GL_{d_1}(k)
\end{displaymath}
bounded by 
\begin{displaymath}
d_2 = d_1D{n^2+D \choose D}
\end{displaymath}
such that $\ker(\tau_{N,H'_u})= H'_u$ and
\begin{displaymath}
d_1 = \max\limits_{i} \bigg\{{{n^2+D \choose D} \choose i}^{2}\bigg\}.
\end{displaymath} 
The identity component of ${\tau_{N,H'_u}(H')}$ is a torus in $GL_{d_{1}}(k)$, by the second case, there exists $H'' \subseteq GL_{d_{1}}(k)$ whose triangular representation bounded by
\begin{displaymath}
(d_{1}^{2}+1)\max\limits_{i}\bigg\{{d_{1}^{2}+1 \choose i}^{2}\bigg\}
\end{displaymath}
such that $(a)-(d)$ are true for $\tau_{N,H'_u}(H')$ with 
\begin{displaymath}
\bar{d}=J\Bigg{(}\max\limits_{i}\bigg\{{d_{1}^{2}+1 \choose i}^{2}\bigg\}\Bigg{)} \le 2^{10n^2}4^{n^2 8^{(2n^3)^{149n^8}}}.
\end{displaymath}
Let 
\begin{displaymath}
H={\tau_{N,H'_u}}^{-1}\big{(}H'' \cap \tau_{N,H'_u}(N)\big{)}. 
\end{displaymath}
By Lemma \ref{lem: inversebound}, $H$ has a triangular representation bounded by 
\begin{displaymath}
d_{3}  = n\Bigg{(} d_{2}
(d_{1}^{2}+1)\max\limits_{i}\bigg\{{d_{1}^{2}+1 \choose i}^{2}\bigg\}\Bigg{)}^{5.5n^3}  \le  n8^{5.5n^316^{(2n^3)^{149n^8}}}.
\end{displaymath}
By a similar argument in the proof of \cite[Proposition~B.11]{feng2015hrushovski}, $(a)-(d)$ are true for $H$ and $H'$.
Let $\mathcal{F}$ be the family of such algebraic subgroups $H$. Then $\mathcal{F}$ is the desired family with 
\[
d_{3} \le  n8^{5.5n^316^{(2n^3)^{149n^8}}}.
\qedhere
\]
\end{proof}

\begin{cor} \label{cor: degreebound}
Assume that $n>1$. There exists a family $\bar{\mathcal{F}}$ of algebraic subgroups of $GL_n(k)$ whose triangular representations are bounded by 
\begin{displaymath}
d_{3}
\le n8^{5.5n^316^{(2n^3)^{149n^8}}}
\end{displaymath}
such that for any algebraic subgroup $H' \subseteq GL_n(k)$ there exists $\bar{H}$ of $\bar{\mathcal{F}}$ such that $H' \subseteq \bar{H}$ and every unipotent element of $\bar{H}$ is in $(H')^{\circ}$.
\end{cor}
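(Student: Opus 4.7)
The plan is to define $\bar{\mathcal{F}}$ as the family of all subgroups of the form $\bar H := H'H$, where $H' \subseteq GL_n(k)$ is an arbitrary algebraic subgroup and $H \in \mathcal{F}$ is the corresponding group supplied by Theorem \ref{thm:degreekappa3}. For each such $H'$, the inclusion $H' \subseteq \bar H$ is immediate from this construction.

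By properties (b) and (c) of Theorem \ref{thm:degreekappa3}, $H \trianglelefteq \bar H$ and $[\bar H : H] \le \bar d$, so $\bar H$ decomposes as a disjoint union of at most $\bar d$ left cosets $g_1 H, \dots, g_s H$. Each coset $g_i H$ is the image of $H$ under the substitution $x \mapsto g_i^{-1} x$, a degree-preserving linear change of coordinates on $k[x_{i,j}]$. Re-running the construction of $H$ from the proof of Theorem \ref{thm:degreekappa3} with the translated generators, Sz\'ant\'o's algorithm (Theorem \ref{thm:SzantoBound}) therefore yields a triangular representation of $I(g_i H)$ bounded by the same $d_3$. Since $\sqrt{I(\bar H)} = \bigcap_i \sqrt{I(g_i H)}$, concatenating the triangular families representing the individual cosets (exactly as in the proof of Lemma \ref{lem: IJbound}) produces a triangular representation of $\bar H$ whose polynomials all have degree at most $d_3$. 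By contrast, intersecting the $\bar d$ ideals $I(g_i H)$ at the level of generating sets would blow up the degree to roughly $d_3^{\bar d}$, and this is exactly where the triangular-set formalism is essential.

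For the unipotent condition, let $u \in \bar H$ be unipotent. Since $H \trianglelefteq \bar H$ and $[\bar H : H] \le \bar d$, the image of $u$ in the finite quotient $\bar H / H$ has order dividing $\bar d!$, so $v := u^{\bar d!}$ lies in $H$. As a power of the unipotent $u$, the element $v$ is unipotent, hence by property (d) of Theorem \ref{thm:degreekappa3}, $v \in (H')^{\circ}$. Writing $u = \exp(X)$ for a nilpotent matrix $X$ (available in characteristic zero), $v = \exp(\bar d! \cdot X)$ lies in the closed connected subgroup $(H')^{\circ}$; since $\bar d! \cdot X$ is nilpotent, the Zariski closure of the cyclic group $\langle v \rangle$ is exactly the one-parameter subgroup $\{\exp(sX) : s \in k\}$, which must therefore lie in $(H')^{\circ}$. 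In particular $u = \exp(X) \in (H')^{\circ}$.

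The main obstacle I anticipate is the degree-bound step: verifying that the passage from $H$ to the union $H'H$ preserves the triangular-representation bound $d_3$. The key point is that a union of varieties corresponds to the concatenation of triangular families rather than to a product of ideals, so the bound is preserved; a generating-set approach would instead be forced to raise the degree to roughly $d_3^{\bar d}$, which is precisely the inefficiency this paper avoids.
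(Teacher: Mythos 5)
Your proof is correct and follows the same overall outline as the paper — define $\bar{\mathcal{F}}$ via the groups $H'H$, get $H' \subseteq \bar{H}$ for free, bound the triangular representation by treating $\bar{H}$ as a union of at most $\bar{d}$ cosets of $H$ and concatenating their triangular families, and deduce the unipotent condition from property (d) of Theorem \ref{thm:degreekappa3}. Where you diverge is in the unipotent step. The paper argues more directly: every unipotent element of $\bar{H}$ lies in the identity component $\bar{H}^{\circ}$, and since $H$ has finite index in $\bar{H}$ one has $\bar{H}^{\circ} = H^{\circ} \subseteq H$, so property (d) applies immediately. You instead pass to the power $v = u^{\bar d!}$ to force it into $H$, apply (d) to conclude $v \in (H')^{\circ}$, and then recover $u$ from the Zariski closure of $\langle v \rangle$, which in characteristic zero is the full one-parameter subgroup through $u$. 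Both arguments rest on the same underlying fact (a nontrivial unipotent generates a connected one-dimensional subgroup), but the paper's route via $\bar{H}^{\circ} = H^{\circ}$ is shorter and avoids the exponent-juggling; yours has the mild advantage of not needing the finite-index identity-component identity explicitly. On the degree-bound side you are actually slightly more careful than the paper: the paper cites Lemma \ref{lem: IJbound} without saying why each individual coset $g_i H$ admits a triangular representation bounded by $d_3$, whereas you supply the missing justification — translate the pre-Sz\'ant\'o generating set by $g_i^{-1}$ (a degree-preserving linear substitution) and re-apply Sz\'ant\'o's algorithm, whose output bound depends only on the input degrees. Your phrase ``re-running the construction of $H$'' is a little loose since a coset is not a subgroup and the construction of $H$ cannot literally be re-run on it, but the intended argument (translate the degree-$D'$ generators of $I(H)$ and invoke Theorem \ref{thm:SzantoBound} afresh) is sound and is exactly what is needed to legitimize the appeal to Lemma \ref{lem: IJbound}.
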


\begin{proof}
Let $\bar{\mathcal{F}}=\{\bar{H}: \ there \ exists \ H \in \mathcal{F} \ such \ that \ H\trianglelefteq \bar{H} \ and \ [\bar{H}:H] \le \bar{d} \}$.
Let $H' \subseteq GL_n(k)$ be an algebraic subgroup. By Theorem \ref{thm:degreekappa3}, there is an $H \in \mathcal{F}$ such that $(a)-(d)$ are true for $H$ and $H'$. 
Let $\bar{H} = H'H$. 
By $(b)$ and $(c)$ in Theorem \ref{thm:degreekappa3}, $\bar{H} \in \bar{\mathcal{F}}$.
By $(d)$ in Theorem \ref{thm:degreekappa3}, every unipotent element of $H$ is in $(H')^{\circ}$.
Since every unipotent element of $\bar{H}$ is in $\bar{H}^{\circ} \subseteq H^{\circ}$.
Hence, every unipotent element of $\bar{H}$ is in $(H')^{\circ}$.
Since $\bar{H}$ is the union of the cosets of some element in $\mathcal{F}$ and every element of $\mathcal{F}$ has a triangular representation bounded by $d_3$, by Lemma \ref{lem: IJbound}, we have that $\bar{H}$ has a triangular representation bounded by 
\begin{displaymath}
d_{3}
\le n8^{5.5n^316^{(2n^3)^{149n^8}}}.
\end{displaymath}
Therefore, $\bar{\mathcal{F}}$ is bounded by $d_{3}
\le n8^{5.5n^316^{(2n^3)^{149n^8}}}$.
\end{proof}

\begin{cor} \label{cor: Proto}
Let $\bar{\mathcal{F}}$ be the family in Corollary \ref{cor: degreebound}. Then for any algebraic subgroup $H' \subseteq GL_n(k)$, there exists $\bar{H}$ of $\bar{\mathcal{F}}$ such that 
\begin{displaymath}
(\bar{H}^{\circ})^{t} \trianglelefteq (H')^{\circ} \subseteq H' \subseteq \bar{H}.
\end{displaymath}
\end{cor}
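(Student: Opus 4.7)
The plan is to deduce the corollary almost directly from Corollary \ref{cor: degreebound}, combined with the structural fact \cite[Lemma~B.10]{feng2015hrushovski} that has already been invoked in the proof of Theorem \ref{thm:degreekappa3}: for a connected algebraic subgroup $G \subseteq GL_{n}(k)$, the subgroup $G^{t}$ (the intersection of kernels of all characters of $G$) is generated by the unipotent elements of $G$.

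First I would apply Corollary \ref{cor: degreebound} to the given $H'$ and let $\bar{H} \in \bar{\mathcal{F}}$ be the resulting algebraic subgroup. That immediately supplies the two outer inclusions in the stated conclusion: $H' \subseteq \bar{H}$ is part of the corollary, and $(H')^{\circ} \subseteq H'$ is automatic. So the only substantive task is to verify $(\bar{H}^{\circ})^{t} \trianglelefteq (H')^{\circ}$.

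Next I would establish the containment $(\bar{H}^{\circ})^{t} \subseteq (H')^{\circ}$. Applying the cited lemma to the connected group $\bar{H}^{\circ}$, the subgroup $(\bar{H}^{\circ})^{t}$ is generated by the unipotent elements of $\bar{H}^{\circ}$. Each such element is also a unipotent element of $\bar{H}$, so by the second conclusion of Corollary \ref{cor: degreebound} it lies in $(H')^{\circ}$. Since $(H')^{\circ}$ is a subgroup, it contains the entire subgroup generated by these elements.

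Finally, for normality I would use that $(\bar{H}^{\circ})^{t}$ is characteristic in $\bar{H}^{\circ}$, since it is cut out by the full character group of $\bar{H}^{\circ}$ and that construction is preserved by every automorphism of $\bar{H}^{\circ}$. Because $H' \subseteq \bar{H}$ forces $(H')^{\circ} \subseteq \bar{H}^{\circ}$, conjugation by any element of $(H')^{\circ}$ stabilizes $(\bar{H}^{\circ})^{t}$, which together with the containment already established yields $(\bar{H}^{\circ})^{t} \trianglelefteq (H')^{\circ}$. The only step that requires any care is correctly invoking the unipotent-generation fact for $G^{t}$; once that is in hand the argument is purely formal and inherits the bound on $\bar{\mathcal{F}}$ from Corollary \ref{cor: degreebound} without introducing any new quantitative content.
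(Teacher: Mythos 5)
Your proposal is correct and follows essentially the same route as the paper: apply Corollary \ref{cor: degreebound} to get $\bar{H}$, invoke \cite[Lemma~B.10]{feng2015hrushovski} to see that $(\bar{H}^{\circ})^{t}$ is generated by unipotent elements of $\bar{H}^{\circ}$ and hence lies in $(H')^{\circ}$, and then obtain normality from $(H')^{\circ}\subseteq\bar{H}^{\circ}$. The only cosmetic difference is that you appeal to $(\bar{H}^{\circ})^{t}$ being characteristic in $\bar{H}^{\circ}$, whereas the paper uses only that it is normal in $\bar{H}^{\circ}$ (as an intersection of kernels of characters); normality already suffices since conjugation by elements of $(H')^{\circ}\subseteq\bar{H}^{\circ}$ is inner.
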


\begin{proof}
By Corollary \ref{cor: degreebound}, there exists $\bar{H}$ of $\bar{\mathcal{F}}$ such that $H' \subseteq \bar{H}$ and every unipotent element of $\bar{H}$ is in $(H')^{\circ}$. 
Since $\bar{H}^{\circ}$ is a connected subgroup of $GL_n(k)$, $(\bar{H}^{\circ})^{t}$ is generated by all unipotent elements in $\bar{H}^{\circ}$ by \cite[Lemma~B.10]{feng2015hrushovski}. 
Since $(\bar{H}^{\circ})^t \trianglelefteq \bar{H}^{\circ}$ and every unipotent element in $\bar{H}^{\circ}$ is in $(H')^{\circ}$, $(H^{\circ})^{t} \trianglelefteq (H')^{\circ}$.
Therefore, 
$(\bar{H}^{\circ})^{t} \trianglelefteq (H')^{\circ} \subseteq H' \subseteq \bar{H}$.
\end{proof}

\begin{rem}
Since the differential Galois group of $(\ref{equation:1})$ is an algebraic subgroup in $GL_n(k)$, by Corollary \ref{cor: Proto}, there exists an algebraic subgroup $\bar{H}$ bounded by $d_3$ such that $(\bar{H}^{\circ})^{t} \trianglelefteq (H')^{\circ} \subseteq H' \subseteq \bar{H}$. By the definition of the proto-Galois group, this algebraic subgroup $\bar{H}$ is a proto-Galois group of $(\ref{equation:1})$. 
\end{rem}

\section{Comparison}
\label{sec: 4}
We compute $\bar{d}$ and $d_3$ explicitly for $n=2$. We plug in $n=2$ to the equations (\ref{inequalityD:2}), (\ref{inequalityd:1}), (\ref{inequalityd:3}), (\ref{inequalityd:4}), and (\ref{inequality: 5}) instead of the formulas in Theorem \ref{thm:degreekappa3} and Corollary \ref{cor: degreebound} to do calculations, which would give us more refined bounds. Feng in \cite{feng2015hrushovski} roughly estimated that $\bar{d}$ is quintuply exponential in $n$ and $d_3$ is sextuply exponential in $n$, but he did not give numerical bounds for them. In order to compare our bounds with the ones in \cite{feng2015hrushovski}, we also give numerical bounds in \cite[Proposition~11, Proposition~14]{feng2015hrushovski}.
In \cite[Proposition~11, Proposition~14]{feng2015hrushovski},
$d_3$ is denoted as $\tilde{d}$ and $\bar{d}$ is denoted as $I(n)$ respectively. The numerical bounds of $\tilde{d}$ and $I(n)$ are as follows:
\begin{displaymath}
\tilde{d} \le 32^{2^{2^{2^{(2n)^{2^{(24n^2)}}}}}},
\\
I(n) \le 4^{2^{2^{(2n)^{2^{(12n^2)}}}}}.
\end{displaymath}
When $n=2$, 
\begin{displaymath}
\bar{d} \le 2^{2^{2^{2^{18}}}}, 
I(n) \le 2^{2^{2^{2^{2^{96}}}}},
\end{displaymath}
and
\begin{displaymath}
d_3 \le 2^{2^{2^{2^{18}}}},  
\tilde{d} \le 2^{2^{2^{2^{2^{2^{194}}}}}}.
\end{displaymath}

\begin{ack}
This work has been partially supported by the NSF grants CCF-0952591, CCF-1563942, and DMS-1413859 and by PSC-CUNY grant \#60098-00 48.
I am grateful to Eli Amzallag and Alexey Ovchinnikov for discussions on this work.
\end{ack}

\bibliographystyle{apalike}
\bibliography{main}

\end{document}